\theoremstyle{plain}
\newtheorem{theorem}{Theorem}
\newtheorem{lemma}{Lemma}
\newtheorem{proposition}{Proposition}
\newtheorem{corollary}{Corollary}[theorem]
\theoremstyle{definition}
\newtheorem{example}{Example}
\newtheorem{remark}{Remark}
\newtheorem{definition}{Definition}
\newtheorem{pr}{Question}
\begin{document}

\title{Arithmetic  graphs and the products of  finite groups\footnote{This work is supported by BFFR $\Phi23\textrm{PH}\Phi\textrm{-}237$
}}
\author{Viachaslau I. Murashka\footnote{e-mail: mvimath@yandex.by}  \footnote{Faculty of Mathematics and Technologies of Programming, Francisk Skorina Gomel State University, Sovetskaya 104, Gomel, 246028,  Belarus}}
\date{}
 \maketitle
\begin{abstract}
The Hawkes   graph $\Gamma_H(G)$ of $G$ is the directed graph whose vertex set coincides with $\pi(G)$ and it has the edge $(p, q)$ whenever $q\in\pi(G/O_{p',p}(G))$. The Sylow graph $\Gamma_s(G)$ of $G$ is the directed graph with vertex set $\pi(G)$ and $(p, q)$ is an edge of $\Gamma_s(G)$ whenever $q \in\pi(N_G(P)/PC_G(P))$ for some Sylow $p$-subgroup $P$ of $G$. The $N$-critical  graph $\Gamma_{Nc}(G)$ of a group $G$ the directed graph whose vertex set coincides with $\pi(G)$ such that $(p, q)$ is an edge of $\Gamma_{Nc}(G)$ whenever $G$ contains a Schmidt $(p, q)$-subgroup, i.e. a Schmidt $\{p, q\}$-subgroup with a normal Sylow $p$-subgroup. In the paper the   Hawkes, the Sylow and the $N$-critical graphs of the products of totally permutable, mutually permutable and $\mathfrak{N}$-connected subgroups are studied.\\
\textbf{Keywords}: finite group; Hawkes graph; Sylow graph; $N$-critical graph; totally permutable product; mutually permutable product; $\mathfrak{N}$-connected subgroups.\\
\textbf{MSC2020}: 20D40.
\end{abstract}

\section{Introduction}

All groups considered are \textbf{finite}, $G$ always denotes a group and $\pi(G)$ is the set of all prime divisors of $|G|$. If a graph has no isolated vertices, then we will define it just by its edges.

There were many papers in which with every  group a certain graph is assigned  and the connection of the geometry of the graph with the properties of the group is studied  since 1878 (for example, see \cite{Abe2000, Cayley1878, Hawkes1968, Kazarin2011, Kondratev1990, Lucchini2009, VM, Williams1981} and etc.). Among such graphs there is an interesting family of arithmetic graphs, i.e. graphs whose vertices are prime divisors of the group's order.

 In 1968 Hawkes \cite{Hawkes1968} introduced the directed graph $\Gamma_H(G)$ of $G$ whose vertex set coincides with $\pi(G)$ and it has the edge $(p, q)$ whenever $q\in\pi(G/O_{p',p}(G))$. This graph has many interesting properties \cite{Hawkes1968,  Vasilev2022, VM}. For example \cite{Hawkes1968}, if it does not have a loop $(p, p)$, then the $p$-length of $G$ is at most 1.

 Recall  \cite{DAniello2007, Kazarin2011} that the Sylow graph $\Gamma_s(G)$ of $G$ is the directed graph with vertex set $\pi(G)$ and $(p, q)$ is an edge of $\Gamma_s(G)$ whenever $q \in\pi(N_G(P)/PC_G(P))$ for some Sylow $p$-subgroup $P$ of $G$. For applications and properties of this graph see  \cite{DAniello2007, Kazarin2011,  Murashka2022, VM}. In particular \cite{Murashka2022}, every connected component of $\Gamma_s(G)$  corresponds to a normal Hall subgroup of $G$.

 Recall that a Schmidt $(p, q)$-group is
a Schmidt group (i.e. non-nilpotent group, all whose proper subgroups are nilpotent) $G$ with $\pi(G) = \{p, q\}$ and a normal Sylow $p$-subgroup. The $N$-critical \cite{VM} graph $\Gamma_{Nc}(G)$ of a group $G$ is the directed graph whose vertex set coincides with $\pi(G)$ such that $(p, q)$ is an edge of $\Gamma_{Nc}(G)$ whenever $G$ contains a Schmidt $(p, q)$-subgroup.
For its properties and applications see \cite{Murashka2021, IMurashka2021, VM}.

Recall that $\mathfrak{N}$ denotes the class of all nilpotent groups.
Following   Carocca \cite{Carocca1996} subgroups $H$ and $K$ are called $\mathfrak{N}$-connected if $\langle x, y\rangle\in\mathfrak{N}$ for every $x\in H$ and $y\in K$. Products of $\mathfrak{N}$-connected subgroups were studied in
\cite{Carocca1996, Francalanci2021, Hauck2003}  and other. Here we prove

\begin{theorem}\label{thm1}
  Let a group $G$ be the product of pairwise permutable and $\mathfrak{N}$-connected subgroups $G_1,\dots, G_n$. Then
  $$ \Gamma_s(G)=\bigcup_{i=1}^n\Gamma_s(G_i), \Gamma_H(G)=\bigcup_{i=1}^n\Gamma_H(G_i), \Gamma_{Nc}(G)=\bigcup_{i=1}^n\Gamma_{Nc}(G_i).$$
\end{theorem}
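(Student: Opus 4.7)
The plan hinges on the following direct consequence of $\mathfrak{N}$-connectedness: whenever $i\ne j$, a $p$-element of $G_i$ and a $q$-element of $G_j$ with $p\ne q$ generate a nilpotent subgroup on coprime-order generators, hence commute. Consequently $[G_{i,p},G_{j,q}]=1$ for all $i\ne j$ and $p\ne q$ and all Sylow $p$-subgroups $G_{i,p}\le G_i$, $G_{j,q}\le G_j$. It follows that for each prime $p$ the product $P:=G_{1,p}\cdots G_{n,p}$ is a Sylow $p$-subgroup of $G$, every Sylow $p$-subgroup of $G$ arises in this way up to conjugation, and every Sylow $q$-subgroup of $G_j$ with $q\ne p$ centralizes $G_{i,p}$ for every $i\ne j$.

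For each of the three graphs I verify both inclusions. The containment $\bigcup_i\Gamma_{Nc}(G_i)\subseteq\Gamma_{Nc}(G)$ is immediate, since a Schmidt subgroup of some $G_i$ is already a subgroup of $G$. For $\bigcup_i\Gamma_H(G_i)\subseteq\Gamma_H(G)$, I would prove that the product $O_{p',p}(G_1)\cdots O_{p',p}(G_n)$ is a normal subgroup of $G$ (the cross-commutations force each factor to be normalized by the others) lying inside $O_{p',p}(G)$, so that $G/O_{p',p}(G)$ is a quotient of a section of $\prod_i G_i/O_{p',p}(G_i)$ and therefore inherits the relevant prime divisors. For $\bigcup_i\Gamma_s(G_i)\subseteq\Gamma_s(G)$, taking $P=G_{1,p}\cdots G_{n,p}$ one checks $N_{G_i}(G_{i,p})\le N_G(P)$ and $N_{G_i}(G_{i,p})\cap PC_G(P)\le G_{i,p}C_{G_i}(G_{i,p})$, which embeds $N_{G_i}(G_{i,p})/G_{i,p}C_{G_i}(G_{i,p})$ into $N_G(P)/PC_G(P)$.

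For the reverse inclusions, the Hawkes and Sylow cases follow from the stronger structural identities
\[ O_{p',p}(G)=O_{p',p}(G_1)\cdots O_{p',p}(G_n) \quad\text{and}\quad \frac{N_G(P)}{PC_G(P)}\;\hookrightarrow\;\prod_{i=1}^{n}\frac{N_{G_i}(G_{i,p})}{G_{i,p}C_{G_i}(G_{i,p})}, \]
both obtained by applying the cross-commutation to the factorizations $G=\prod_i G_i$ and $P=\prod_i G_{i,p}$. The main obstacle is the inclusion $\Gamma_{Nc}(G)\subseteq\bigcup_i\Gamma_{Nc}(G_i)$, since a Schmidt $(p,q)$-subgroup of $G$ need not lie in any single $G_i$. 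My plan is to use the reformulation that $(p,q)\in\Gamma_{Nc}(G)$ iff $G$ contains a non-nilpotent $\{p,q\}$-subgroup iff some $p$-element and some $q$-element of $G$ fail to commute; writing such elements as products with respect to $P=\prod_i G_{i,p}$ and a Sylow $q$-subgroup $Q=\prod_j G_{j,q}$ and expanding the commutator by means of the cross-commutation $[G_{i,p},G_{j,q}]=1$ for $i\ne j$, the non-commutation must occur inside a single factor $G_k$; the classical theorem that every non-nilpotent group contains a Schmidt subgroup then yields the required edge in $\Gamma_{Nc}(G_k)$.
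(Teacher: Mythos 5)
Your opening observation is correct and genuinely useful: $\mathfrak{N}$-connectedness does force a $p$-element of $G_i$ and a $q$-element of $G_j$ (with $i\ne j$ and $p\ne q$) to commute, since they are then elements of coprime prime-power order in a nilpotent group. But the two pillars you build on this do not hold. First, your reformulation of the $N$-critical graph is false: both ``$G$ contains a non-nilpotent $\{p,q\}$-subgroup'' and ``some $p$-element and some $q$-element of $G$ fail to commute'' are symmetric in $p$ and $q$, whereas $\Gamma_{Nc}$ is directed. Already for $S_3$ one has $(3,2)\in E(\Gamma_{Nc}(S_3))$ but $(2,3)\notin E(\Gamma_{Nc}(S_3))$, although $2$-elements and $3$-elements of $S_3$ certainly fail to commute. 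So even if your commutator expansion located the non-commutation inside a single factor $G_k$, it would only produce an edge $(p,q)$ \emph{or} $(q,p)$ in $\Gamma_{Nc}(G_k)$, not necessarily the one you started from; moreover $\langle x,y\rangle$ for a non-commuting $p$-element $x$ and $q$-element $y$ need not be a $\{p,q\}$-group at all (take generators of $A_5$).

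Second, the ``stronger structural identities'' do not follow from $[G_{i,p},G_{j,q}]=1$ ($i\ne j$, $p\ne q$). That relation says nothing about $[G_{i,p},G_{j,p}]$ (two $p$-elements from different factors generate a $p$-group but need not commute, so $P$ is not a central product of the $G_{i,p}$), nothing about whether $G_j$ normalizes $O_{p',p}(G_i)$ (it need not even normalize $G_i$), and nothing about how a commutator $[u_k,v_k]\in G_k$ interacts with elements of the other factors, so the expansion of $[u_1\cdots u_n,\,v_1\cdots v_n]$ does not decouple into the $[u_k,v_k]$. The missing ingredient is precisely the theorem quoted at the start of Section 3 from \cite{Hauck2003}, namely $[G_i,\prod_{j\ne i}G_j]\le \mathrm{Z}_\infty(G)$. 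The paper uses it to prove $G/\mathrm{Z}_\infty(G)\simeq G_1/\mathrm{Z}_\infty(G_1)\times\dots\times G_n/\mathrm{Z}_\infty(G_n)$ (Lemma \ref{lemma1}) and then transfers all three graphs across the hypercentral quotient via Lemma \ref{lemma2} with $\mathfrak{F}=\mathfrak{N}$, where $\Gamma(\mathfrak{N})$ has no edges; the directedness problem and all the normalizer/centralizer bookkeeping disappear because everything is computed in a genuine direct product. To salvage your elementary approach you would have to route the Sylow and $O_{p',p}$ computations through that quotient (or an equivalent near-central-product decomposition), at which point you have essentially reconstructed the paper's argument.
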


Recall that $G=AB$ is called a totally permutable product of subgroups $A$ and $B$ if every subgroup of $A$ permutes with every subgroup of $B$. Asaad and Shaalan [4]
proved that a totally permutable product of two supersoluble groups is also supersoluble. This result started a study of totally permutable products in connection with the theory of group's classes (for example, see \cite[Chapter 4]{PFG}).

\begin{theorem}\label{thm2}
  Let a group $G$ be the product of pairwise totally permutable   subgroups $G_1,\dots, G_n$ and $\Gamma(G)=\{(p, q)\mid p, q\in\pi(G), q\in\pi(p-1)\}$. Then

  $$ \Gamma_s(G)\subseteq\bigcup_{i=1}^n\Gamma_s(G_i)\cup \Gamma(G), \Gamma_H(G)\subseteq\bigcup_{i=1}^n\Gamma_H(G_i)\cup \Gamma(G), \Gamma_{Nc}(G)\subseteq\bigcup_{i=1}^n\Gamma_{Nc}(G_i)\cup \Gamma(G).$$
\end{theorem}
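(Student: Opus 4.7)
\medskip

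\noindent\textbf{Proof proposal.}
I would begin by reducing to the case $n=2$ via induction on $n$. It is standard for pairwise totally permutable factors that the partial product $H = G_1\cdots G_{n-1}$ is again totally permutable with $G_n$ (see \cite[Ch.~4]{PFG}); the induction hypothesis applied to $H$ yields $\Gamma_\ast(H)\subseteq\bigcup_{i<n}\Gamma_\ast(G_i)\cup\Gamma(H)$ for each of the three graphs, and applying the $n=2$ statement to the decomposition $G=H\cdot G_n$ (noting $\Gamma(H)\cup\Gamma(G_n)\subseteq\Gamma(G)$) finishes the induction step. So everything reduces to proving the theorem for $G=AB$ with $A,B$ totally permutable.

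The strategy for $n=2$ is to show that any edge $(p,q)\in\Gamma_\ast(G)\setminus(\Gamma_\ast(A)\cup\Gamma_\ast(B))$ must come from the supersoluble ``top'' of $G$. The technical engine is the body of structural results for totally permutable products from \cite[Ch.~4]{PFG}: the residuals $A^{\mathfrak N}$ and $B^{\mathfrak N}$ (and $A^{\mathfrak U}$, $B^{\mathfrak U}$) are normal in $G$, $G^{\mathfrak U}=A^{\mathfrak U}B^{\mathfrak U}$, $[A^{\mathfrak U},B]=[A,B^{\mathfrak U}]=1$, and $G/G^{\mathfrak U}$ is supersoluble. I would treat $\Gamma_{Nc}$ first: if $S\le G$ is a Schmidt $(p,q)$-subgroup, I would show that either $S$ lies in a conjugate of $A$ or of $B$ (in which case the edge is inherited), or the image $\overline S$ in the supersoluble quotient $G/G^{\mathfrak U}$ is still non-nilpotent and hence still a Schmidt $(p,q)$-section. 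Since a Schmidt $(p,q)$-group living inside a supersoluble group has its $p$-chief factor $P/\Phi(P)$ of order $p$ acted on faithfully by an element of order $q$, one gets $q\in\pi(p-1)$, i.e. $(p,q)\in\Gamma(G)$.

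For $\Gamma_H$ and $\Gamma_s$ I would argue analogously, exploiting the known links between these graphs and Schmidt subgroups developed in \cite{Murashka2021, Murashka2022, VM}: an edge $(p,q)$ in $\Gamma_H(G)$ or $\Gamma_s(G)$ produces an element of order $q$ acting non-trivially on a $p$-chief factor of $G$, and via standard reductions this action is realised by a Schmidt $(p,q)$-subgroup of $G$ (or of an appropriate quotient). The previous analysis then applies: either the action is witnessed inside $A$ or $B$, or it is witnessed in the supersoluble quotient $G/G^{\mathfrak U}$, forcing $q\mid p-1$.

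The main obstacle is the Schmidt subgroup case-split for $\Gamma_{Nc}$. A Schmidt $(p,q)$-subgroup $S\le AB$ need not respect the factorisation $S=(S\cap A)(S\cap B)$, so the subtle step is to rule out the intermediate situation in which $S$ straddles $G^{\mathfrak U}$ and the supersoluble top without being conjugate into $A$ or $B$. The centralising relations $[A^{\mathfrak U},B]=[A,B^{\mathfrak U}]=1$ are what make this tractable: they force the unique non-central $p$-chief factor of $S$ either to sit inside one of $A^{\mathfrak U}, B^{\mathfrak U}$ (giving an edge already in $\Gamma_{Nc}(A)$ or $\Gamma_{Nc}(B)$) or to sit above $G^{\mathfrak U}$, where supersolubility of $G/G^{\mathfrak U}$ delivers $q\mid p-1$. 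Once this case analysis is clean for $\Gamma_{Nc}$, the $\Gamma_H$ and $\Gamma_s$ inclusions follow with only routine additional bookkeeping.
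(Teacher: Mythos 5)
There is a genuine gap, and it sits exactly where you flagged the ``main obstacle'': the dichotomy you rely on for $\Gamma_{Nc}$ is false. Take $G=A_4\times A_4$, which \emph{is} a totally permutable product of its two direct factors. The diagonal $\Delta=\{(x,x)\mid x\in A_4\}$ is a Schmidt $(2,3)$-subgroup; it is not contained in any conjugate of either factor (both factors are normal), and since $G^{\mathfrak U}=V_4\times V_4$ we get $\Delta G^{\mathfrak U}/G^{\mathfrak U}\cong A_4/V_4\cong Z_3$, which is nilpotent. So $\Delta$ is neither conjugate into a factor nor non-nilpotent modulo $G^{\mathfrak U}$, and your argument does not detect the edge $(2,3)$ (here $3\nmid 2-1$, so $\Gamma(G)$ does not help either). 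The edge is of course inherited from $\Gamma_{Nc}(A_4)$, but not for the reason you give. The paper's mechanism is different and avoids any conjugacy claim: it works with the supersoluble \emph{hypercentre} $\mathrm{Z}_{\mathfrak U}(G)$ rather than the residual $G^{\mathfrak U}$. From $[G_i,\prod_{j\neq i}G_j]\leq\mathrm{Z}_{\mathfrak U}(G)$ (\cite[Lemma 4.2.12]{PFG}) one gets $G/\mathrm{Z}_{\mathfrak U}(G)\simeq G_1/\mathrm{Z}_{\mathfrak U}(G_1)\times\dots\times G_n/\mathrm{Z}_{\mathfrak U}(G_n)$ (Lemma \ref{lemma1}, which also handles all $n$ at once, no induction needed). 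Then either the Schmidt subgroup $S$ stays non-nilpotent in this quotient, in which case the direct-product formula of Lemma \ref{lem1}(5) places the edge in some $\Gamma_{Nc}(G_i)$ with no claim about where $S$ itself sits; or $S$ dies in the quotient, and then $S\cap\mathrm{Z}_{\mathfrak U}(G)\leq\mathrm{Z}_{\mathfrak U}(S)$ (hereditariness of $\mathfrak U$) forces $S$ itself to be supersoluble, whence $q\in\pi(p-1)$. In the example above, $\mathrm{Z}_{\mathfrak U}(A_4\times A_4)=1$, so $\Delta$ survives and the first alternative applies.

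The second problem is your treatment of $\Gamma_s$ and $\Gamma_H$ as ``routine bookkeeping'' via Schmidt subgroups. Since $\Gamma_s(G)\subseteq\Gamma_{Nc}(G)$, converting an edge of $\Gamma_s(G)$ into a Schmidt $(p,q)$-subgroup and then locating that edge in $\Gamma_{Nc}(G_i)$ proves nothing about $\Gamma_s(G_i)$, which is what the theorem asserts; the same objection applies to $\Gamma_H$ (whose edges, e.g.\ loops or edges coming from non-abelian chief factors, need not be witnessed by Schmidt subgroups at all). Each graph has to be played off against the hypercentre directly, as in Lemma \ref{lemma2}: for $\Gamma_s$, a $q$-element acting non-trivially on a Sylow $p$-subgroup $P$ but trivially modulo $\mathrm{Z}_{\mathfrak U}(G)$ generates with $P$ and $\mathrm{Z}_{\mathfrak U}(G)$ a supersoluble group, so $(p,q)\in\Gamma_s(\mathfrak U)$; for $\Gamma_H$, an edge absent from $\Gamma_H(G/\mathrm{Z}_{\mathfrak U}(G))$ is witnessed by a chief factor below $\mathrm{Z}_{\mathfrak U}(G)$, which is $\mathfrak U$-central. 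Combined with the identification $E(\Gamma_s(\mathfrak U))=E(\Gamma_H(\mathfrak U))=E(\Gamma_{Nc}(\mathfrak U))=\{(p,q)\mid q\in\pi(p-1)\}$ (Lemma \ref{equal}), this is what actually yields all three inclusions simultaneously.
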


\begin{example}
  The symmetric group $S_3$  of degree 3 is a totally permutable product of cyclic groups $Z_3$ and $Z_2$ of orders 3 and 2 respectively. Note that $(3, 2)$ is the unique edge of the Sylow graph, the Hawkes graph and the $N$-critical graph of $S_3$ and the Sylow graphs, the Hawkes graphs and the $N$-critical graphs of $Z_3$ and $Z_2$ have no edges. Thus $\Gamma(S_3)\not\subseteq\Gamma(Z_3)\cup\Gamma(Z_2)$ for $\Gamma\in\{\Gamma_s, \Gamma_H, \Gamma_{Nc}\}$.
\end{example}

\begin{remark}
  Theorems \ref{thm1} and \ref{thm2} follow from a more general result (see Theorem \ref{thm}).
\end{remark}

Recall \cite[Definition 4.1.1]{PFG} that a group $G$ is called a mutually permutable product of its subgroups $A$ and $B$ if $G=AB$,  $A$ permutes with every subgroup of $B$ and    $B$ permutes with every subgroup of $A$. The products of mutually permutable subgroups are widely studied (see \cite[Chapter 4]{PFG}).

\begin{theorem}\label{mut}
Let $G=AB$ be a mutually permutable product of its subgroups $A$ and $B$ and $\Gamma(A, B)=\{(p, q)\mid p\in\pi(A), q\in\pi(B)\cap\pi(p-1)\textrm{ or } p\in\pi(B), q\in\pi(A)\cap\pi(p-1)\}$. Then
$$\Gamma_{Nc}(A)\cup\Gamma_{Nc}(B)\subseteq \Gamma_{Nc}(G)\subseteq \Gamma_{Nc}(A)\cup \Gamma_{Nc}(B)\cup\Gamma(A, B)\textrm{  and }$$
$$\Gamma_{H}(A)\cup\Gamma_{H}(B)\subseteq \Gamma_{H}(G)\subseteq \Gamma_{H}(A)\cup \Gamma_{H}(B)\cup\Gamma(A, B)\cup\{(p,p)\mid p\in\pi(G)\}.$$
\end{theorem}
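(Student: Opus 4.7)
I would split the statement into the easy ``forward'' inclusions and the harder ``reverse'' inclusions. The inclusion $\Gamma_{Nc}(A)\cup\Gamma_{Nc}(B)\subseteq \Gamma_{Nc}(G)$ is immediate, since any Schmidt $(p,q)$-subgroup of a subgroup of $G$ is still a Schmidt $(p,q)$-subgroup of $G$. For $\Gamma_H(A)\cup\Gamma_H(B)\subseteq\Gamma_H(G)$ I would first establish the relation $A\cap O_{p',p}(G)\le O_{p',p}(A)$ in any mutually permutable product $G=AB$, a standard consequence of the good behaviour of the Fitting series under mutual permutability. This immediately yields $|A/O_{p',p}(A)|\mid|AO_{p',p}(G)/O_{p',p}(G)|$, which divides $|G/O_{p',p}(G)|$, giving the desired $\pi$-inclusion.

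For the reverse inclusion $\Gamma_{Nc}(G)\subseteq \Gamma_{Nc}(A)\cup\Gamma_{Nc}(B)\cup\Gamma(A,B)$, take $(p,q)\in\Gamma_{Nc}(G)$ and fix a Schmidt $(p,q)$-subgroup $S=P\rtimes\langle y\rangle$ of $G$, where $P$ is its normal Sylow $p$-subgroup and $\langle y\rangle$ is cyclic of $q$-power order. Since $P/\Phi(P)$ is a faithful irreducible $\mathbb{F}_p\langle y\rangle$-module of some dimension $d$, one has $q\mid p^d-1$, and $d=1$ is equivalent to $q\mid p-1$. The argument would proceed by induction on $|G|$, split into two cases. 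When $d=1$: if the primes $p,q$ lie in different factors of $G=AB$ then $(p,q)\in\Gamma(A,B)$ directly; if they both lie in one factor $F\in\{A,B\}$, a separate argument using that every subgroup of $A$ permutes with every subgroup of $B$ should locate a Schmidt $(p,q)$-subgroup already in $F$. When $d>1$ the $p$-part of $S$ is ``large'': I would invoke the classical theorem that minimal normal subgroups of a mutually permutable product $G=AB$ are contained in $A$ or in $B$, apply it to $G$-chief factors inside $P$, and combine with the inductive hypothesis to force $S$ (up to conjugacy) into one of the factors, giving $(p,q)\in\Gamma_{Nc}(A)\cup\Gamma_{Nc}(B)$.

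For the reverse inclusion for $\Gamma_H$ the plan is to reduce to the $\Gamma_{Nc}$ case already established. For $(p,q)\in\Gamma_H(G)$ with $p\ne q$, the presence of a $q$-element in $G/O_{p',p}(G)$ produces, by a standard Schmidt-subgroup extraction inside a $\{p,q\}$-section, either a Schmidt $(p,q)$-subgroup of $G$ or an edge already in $\Gamma(A,B)$; combining with $\Gamma_{Nc}(G)\subseteq\Gamma_H(G)$ and the previous paragraph closes the case. The loop set $\{(p,p)\mid p\in\pi(G)\}$ is unavoidable slack in the statement, since the $p$-length of a mutually permutable product can strictly exceed the $p$-lengths of its factors, so self-loops can appear in $\Gamma_H(G)$ without appearing in $\Gamma_H(A)\cup\Gamma_H(B)$. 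The principal obstacle I expect is the case $d>1$ of the $\Gamma_{Nc}$ step: confining a Schmidt $(p,q)$-subgroup with a large elementary-abelian $p$-quotient into one factor of $G=AB$, where the minimal-normal-subgroup theorem for mutually permutable products and a careful induction on $|G|$ must do the heavy lifting.
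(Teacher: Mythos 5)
The forward inclusions are fine (for $\Gamma_H$ you do not even need mutual permutability: $\Gamma_H(H)\subseteq\Gamma_H(G)$ holds for every subgroup $H\le G$, which is part 1 of Lemma 1 of the paper, so your computation with $A\cap O_{p',p}(G)\le O_{p',p}(A)$ is correct but superfluous). The real content is in the two reverse inclusions, and there your plan has genuine gaps. For $\Gamma_{Nc}$: in the case $d=1$ with both $p$ and $q$ lying only in one factor, you assert that ``a separate argument \dots should locate a Schmidt $(p,q)$-subgroup already in'' that factor --- but this is precisely the hard statement to be proved, not a routine consequence of permutability. In the case $d>1$ you invoke a ``classical theorem that minimal normal subgroups of a mutually permutable product are contained in $A$ or in $B$''; that is not what the known results say. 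What is available (and what the paper uses) is that $A_GB_G\neq 1$, and that a minimal normal subgroup $N$ with $N\not\le A\cap B$ satisfies $N\cap B=1$ and either $A\le C_G(N)$ or $B\le C_G(N)$, with $N$ cyclic of prime order in the non-centralizing case. Moreover, $P$ is a Sylow subgroup of a Schmidt subgroup, not a normal subgroup of $G$, so there are no ``$G$-chief factors inside $P$'' to which such a theorem could be applied. The paper's actual mechanism is different and does not split on $d$ at all: take a minimal normal subgroup $N\le A_G$, use induction on $|G|$ to force the Sylow $p$-subgroup of $S$ into $N$, and then use a centralizer-transfer lemma (if $P_0\le A$ and $G=AC_G(P_0)$, then $A$ contains a Schmidt $(p,q)$-subgroup) together with the structure of $N$ to land either in $\Gamma_{Nc}(A)\cup\Gamma_{Nc}(B)$ or in $\Gamma(A,B)$.

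For $\Gamma_H$ your proposed reduction to the $\Gamma_{Nc}$ case via ``Schmidt-subgroup extraction inside a $\{p,q\}$-section'' is unsound because the theorem carries no solubility hypothesis: already for $G=A_5$ one has $(5,3)\in E(\Gamma_H(G))$ while $G$ has no Schmidt $(5,3)$-subgroup (a faithful irreducible $\mathbb{F}_5$-module for a group of order $3$ has dimension $2$, so a Schmidt $(5,3)$-group needs an elementary abelian subgroup of order $25$). Hence an edge of the Hawkes graph need not come from a Schmidt subgroup, and $\Gamma_H(G)$ cannot in general be controlled by $\Gamma_{Nc}(G)$ plus loops. The paper instead runs a second, independent minimal-counterexample argument for $\Gamma_H$: reduce to a primitive group with a unique minimal normal subgroup $N$ and $C_G(N)\le N$, show $p\in\pi(N)$ and $O_{p'}(G)=1$, and then split on whether $N\le A\cap B$ (which places the edge in $\Gamma_H(A)\cup\Gamma_H(B)$) or $N\cap B=1$ (which forces $N=C_G(N)=A$ cyclic of order $p$ and $q\in\pi(p-1)$, i.e.\ an edge of $\Gamma(A,B)$). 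You would need an argument of this shape; as written, both reverse inclusions remain unproved.
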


\begin{example}
  Note that the symmetric group $S_4$  of degree 4 is a mutually permutable product of its Sylow 2-subgroup $P$ and the alternating group $A_4$ of degree 4. Now $E(\Gamma_H(P))=\emptyset, E(\Gamma_H(A_4))=\{(2,3)\}, E(\Gamma(P, A_4))=\{(3,2)\}$ and $E(\Gamma_H(S_4))=\{(2,2), (2,3), (3,2)\}$. Hence $\Gamma_H(S_4)\not\subseteq\Gamma_H(P)\cup\Gamma_H(A_4)\cup\Gamma(P, A_4)$.
\end{example}

Recall \cite{Vasilev} that a formation $\mathfrak{F}$ has the Shemetkov property if every $s$-critical for $\mathfrak{F}$ group is a Schmidt group or a group of prime order. For various properties and applications of such formations see \cite[Chapter 6.4]{BallesterBollinches2006}.

\begin{corollary}\label{Shemetkov} A hereditary formation $\mathfrak{F}$ with the Shemetkov property is closed under taking products of mutually permutable $\mathfrak{F}$-subgroups if and only if it contains all supersoluble Schmidt $\pi(\mathfrak{F})$-groups.     \end{corollary}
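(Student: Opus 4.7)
Both directions rest on Theorem~\ref{mut} (applied to $\Gamma_{Nc}$) together with the following preliminary fact, which I will use throughout: a hereditary formation $\mathfrak{F}$ with the Shemetkov property contains every nilpotent $\pi(\mathfrak{F})$-group. To see this, pick a minimum-order counterexample $X$; being a minimum non-$\mathfrak{F}$ group, $X$ is $s$-critical, so by the Shemetkov property it is Schmidt or of prime order. Schmidt groups are non-nilpotent, ruling out the first possibility, and $|X|=p$ together with $p\in\pi(\mathfrak{F})$ would force $Z_p\in\mathfrak{F}$ by heredity, contradicting $X\notin\mathfrak{F}$.

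\textbf{Forward direction.} Take a supersoluble Schmidt $(p,q)$-group $S=P\rtimes Q$ with $p,q\in\pi(\mathfrak{F})$. The supersolubility forces the chief factor $P/\Phi(P)$ to have order $p$, so $P$ is cyclic, and $Q$ is cyclic as is automatic for Schmidt groups. By the preliminary, $P,Q\in\mathfrak{F}$. The product $S=PQ$ is mutually permutable because $P$ is cyclic and normal in $S$, so each subgroup of $P$ is characteristic in $P$, hence normal in $S$, hence permutes with every subgroup of $Q$, while $P$ itself permutes with every subgroup of $Q$ via normality. The closure hypothesis then places $S$ in $\mathfrak{F}$.

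\textbf{Backward direction.} Assume $\mathfrak{F}$ contains every supersoluble Schmidt $\pi(\mathfrak{F})$-group and let $G=AB$ be a mutually permutable product with $A,B\in\mathfrak{F}$. Suppose $G\notin\mathfrak{F}$. By Shemetkov, some $s$-critical subgroup $H\leq G$ lies outside $\mathfrak{F}$; the prime case is excluded as in the preliminary (its prime would be in $\pi(A)\cup\pi(B)\subseteq\pi(\mathfrak{F})$), so $H$ is Schmidt $(p,q)$ and the edge $(p,q)$ belongs to $\Gamma_{Nc}(G)$. Theorem~\ref{mut} places $(p,q)$ in $\Gamma_{Nc}(A)\cup\Gamma_{Nc}(B)\cup\Gamma(A,B)$. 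If $(p,q)\in\Gamma(A,B)$, then $q\mid p-1$, which forces the Schmidt $(p,q)$-group $H$ to be supersoluble, hence in $\mathfrak{F}$ by hypothesis---contradiction. If instead $(p,q)\in\Gamma_{Nc}(A)$ (or symmetrically $\Gamma_{Nc}(B)$), then $A$ contains a Schmidt $(p,q)$-subgroup, which lies in $\mathfrak{F}$ by heredity; combined with a structural refinement of Theorem~\ref{mut} locating the particular Schmidt subgroup $H$ of $G$ inside a conjugate of $A$, this yields $H\in\mathfrak{F}$ and the desired contradiction.

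\textbf{Main obstacle.} The delicate step is the last one: Theorem~\ref{mut} is an edge-level statement about $\Gamma_{Nc}$, while to close the proof one needs a subgroup-level refinement asserting that the specific $s$-critical Schmidt subgroup $H$ itself sits inside $A$ or $B$ (up to conjugation) whenever $q\nmid p-1$. Extracting this refinement is the crux; I would look either at the proof of Theorem~\ref{mut} for a structural witness of $(p,q)\in\Gamma_{Nc}(A)$, or argue formation-theoretically that the membership of some Schmidt $(p,q)$-group of $A$ in $\mathfrak{F}$ already forces every Schmidt $(p,q)$-subgroup of $G$ into $\mathfrak{F}$.
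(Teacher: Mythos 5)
Your forward direction is correct and is a mild variant of the paper's: you write the supersoluble Schmidt group $S=P\rtimes Q$ directly as a mutually permutable product of its (cyclic) Sylow subgroups, whereas the paper passes to $S/\Phi(S)\simeq Z_p\rtimes Z_q$ and then invokes saturation of the soluble part of $\mathfrak{F}$ to climb back up; both work, and your preliminary observation that nilpotent $\pi(\mathfrak{F})$-groups lie in $\mathfrak{F}$ is sound. Your treatment of the $\Gamma(A,B)$ case in the backward direction is also correct: $q\in\pi(p-1)$ forces $|P/\Phi(P)|=p$, hence $P$ cyclic and $H$ supersoluble.

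The genuine gap is exactly where you flag it, and neither of your two proposed repairs in the form stated will close it cleanly. The ``structural refinement'' of Theorem~\ref{mut} you hope for --- that the particular $s$-critical Schmidt subgroup $H$ sits inside a conjugate of $A$ or $B$ --- is false in general: in the normal (hence mutually permutable) product $S_3\times S_3=(S_3\times 1)(1\times S_3)$ the diagonal copy of $S_3$ is a Schmidt $(3,2)$-subgroup contained in no conjugate of either factor. The correct repair is your second alternative, and it is not something you need to ``argue formation-theoretically'' from scratch: it is the quoted result \cite[Theorem 4.4]{VM}, which for a hereditary formation $\mathfrak{F}$ with the Shemetkov property gives $\mathfrak{F}=(G\mid\Gamma_{Nc}(G)\subseteq\Gamma_{Nc}(\mathfrak{F}))$. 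This makes membership in $\mathfrak{F}$ a purely edge-level condition on $\Gamma_{Nc}$, so that once one Schmidt $(p,q)$-group lies in $\mathfrak{F}$ the edge $(p,q)$ lies in $\Gamma_{Nc}(\mathfrak{F})$ and hence \emph{every} Schmidt $(p,q)$-group lies in $\mathfrak{F}$. With that characterisation the whole backward direction becomes the one-line computation $\Gamma_{Nc}(G)\subseteq\Gamma_{Nc}(A)\cup\Gamma_{Nc}(B)\cup\Gamma(A,B)\subseteq\Gamma_{Nc}(\mathfrak{F})$, which is exactly the paper's argument; your case analysis on an individual $s$-critical subgroup $H$ is then unnecessary. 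Without \cite[Theorem 4.4]{VM} (or an equivalent statement proved independently), your proof of the backward direction is incomplete.
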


\begin{corollary}[{\cite[Theorem 2]{Beidleman2005}}]\label{Beidleman}
  Let $p$ be a prime and $\pi$ be a $p$-special set of primes $($i.e. $q\not\in\pi$ whenever
$p$ divides $q(q-1))$. If $G$ is the mutually
permutable product of two subgroups $A$ and $B$ which are normal extensions of $p$-groups
by $\pi$-groups, the same is true for $G$.
\end{corollary}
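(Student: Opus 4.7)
The plan is to show $G\in\mathfrak{F}$, where $\mathfrak{F}$ denotes the class of normal extensions of $p$-groups by $\pi$-groups. The containment $\pi(G)\subseteq\pi(A)\cup\pi(B)\subseteq\{p\}\cup\pi$ is immediate, so the task reduces to producing a normal Sylow $p$-subgroup of $G$; I would do this using Theorem \ref{mut} and an induction on $|G|$.

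First, I would apply Theorem \ref{mut} to show that $\Gamma_{Nc}(G)$ has no edge $(q,p)$ with $q\ne p$. For a Schmidt subgroup $S\le A$ with $p\in\pi(S)$, the normal $p$-subgroup $S\cap O_p(A)$ of $S$ is contained in $O_p(S)$; if $S$ were a Schmidt $(r,p)$-group with $r\ne p$ (so the Sylow $p$-subgroup $P_S$ of $S$ is cyclic, non-normal, and $O_p(S)=\Phi(P_S)$), then $S/(S\cap O_p(A))$ would embed into $A/O_p(A)$, a $\pi$-group with $p\notin\pi$, but the quotient still contains a $p$-element, a contradiction. Hence $p$ is the normal prime of every Schmidt subgroup of $A$ that involves $p$, and $\Gamma_{Nc}(A)$ carries no edge $(q,p)$ with $q\ne p$; the same holds for $B$. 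For $\Gamma(A,B)$, an edge $(q,p)$ with $q\ne p$ would force $p\mid q-1$, so $p\mid q(q-1)$ and $q\notin\pi$; but $q\in\pi(G)\setminus\{p\}\subseteq\pi$ by the first observation, contradiction. Consequently $G$ has no Schmidt $(q,p)$-subgroup with $q\ne p$.

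Second, I would induct on $|G|$. Let $G$ be a minimal counterexample. The class $\mathfrak{F}$ is a hereditary formation and the hypotheses pass to quotients, so $G/M\in\mathfrak{F}$ for every nontrivial normal $M$; two distinct minimal normal subgroups would embed $G$ in a direct product of $\mathfrak{F}$-groups, forcing $G\in\mathfrak{F}$. Hence $G$ has a unique minimal normal subgroup $N$. If $N$ is a $p$-group, the preimage in $G$ of the normal Sylow $p$-subgroup of $G/N$ is a normal Sylow $p$-subgroup of $G$, a contradiction. Otherwise, $N$ contains elements of some prime order $q\ne p$ with $q\in\pi$. If a Sylow $p$-subgroup $P$ of $G$ acted nontrivially on $N$, an element of order $p$ in $P$ would act nontrivially on some $P$-invariant elementary abelian $q$-subgroup of $N$, and the standard construction would yield a Schmidt $(q,p)$-subgroup of $G$, contradicting the first step. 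So $[P,N]=1$; then $PN$ is normal in $G$, $PN=P\times N$, and $P$, as the characteristic Sylow $p$-subgroup of $PN$, is normal in $G$---the final contradiction.

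The main obstacle is the case of a non-abelian minimal normal subgroup $N=T^r$, where producing a Schmidt $(q,p)$-subgroup from a nontrivial $p$-action requires coprime-action arguments inside each simple factor $T$; in the subcase $p\mid|N|$, the $p$-special condition on $\pi$ enters to constrain which primes can appear in Schmidt sections of $T$ and to drive the contradiction.
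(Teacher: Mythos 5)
Your route is genuinely different from the paper's: the paper identifies the class $\mathfrak{F}$ of normal extensions of $p$-groups by $\pi$-groups with the formation of $p$-closed $(\{p\}\cup\pi)$-groups, asserts it has the Shemetkov property and contains all supersoluble Schmidt $\pi(\mathfrak{F})$-groups, and then quotes Corollary \ref{Shemetkov}; you instead run a direct minimal-counterexample argument on top of Theorem \ref{mut}. Your first step is correct: $A$ and $B$ are $p$-closed, so $\Gamma_{Nc}(A)$ and $\Gamma_{Nc}(B)$ carry no edge $(q,p)$ with $q\neq p$, and the $p$-special hypothesis excludes such edges from $\Gamma(A,B)$; hence $G$ has no Schmidt $(q,p)$-subgroup with $q\neq p$.

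The second step, however, has a genuine gap, and it is exactly the case you flag at the end without resolving. If the unique minimal normal subgroup $N$ is non-abelian with $p\in\pi(N)$, the action of a Sylow $p$-subgroup on $N$ is not coprime, so there is no $P$-invariant Sylow $q$-subgroup to feed into the Schmidt construction; worse, the output of your first step gives no traction here, because a non-$p$-closed simple group need not contain any Schmidt $(q,p)$-subgroup at all (for $p=5$, the group $A_5$ has only Schmidt subgroups of types $(2,3)$, $(3,2)$ and $(5,2)$, yet is not $5$-closed), so no "$p$-special" constraint on Schmidt sections of $T$ can drive the contradiction. The missing ingredient is the structural fact about mutually permutable products already used in the paper's proof of Theorem \ref{mut}: by \cite[Theorem 4.3.11]{PFG} one has $A_GB_G\neq 1$, so the unique minimal normal subgroup satisfies $N\leq A$ (say); if $N$ is non-abelian then $N\cap \mathrm{O}_p(A)=1$, so $N$ embeds in the $\pi$-group $A/\mathrm{O}_p(A)$ and therefore $p\notin\pi(N)$ after all. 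With that case eliminated, your coprime-action argument works, provided you also justify the normality of $P$ at the end by taking $P$ inside the preimage $K$ of the normal Sylow $p$-subgroup of $G/N$, so that $K=P\times N\trianglelefteq G$ and $P=\mathrm{O}_p(K)\trianglelefteq G$. So the plan is salvageable, but as written the decisive case is not handled.
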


\begin{theorem}\label{thm4}
  Let a group $G$ be a product of pairwise mutually permutable   soluble subgroups $G_1,\dots, G_n$ and $\Gamma(G_i, G_j)$ be defined the same way as in Theorem \ref{mut}. Then

  $$ \Gamma_H(G)\subseteq\bigcup_{1\leq i\leq n}\Gamma_H(G_i)\cup\bigcup_{1\leq i, j\leq n, i\neq j} \Gamma(G_i, G_j)\cup\{(p,p)\mid p\in\pi(G)\}\textrm{ and}$$ $$\Gamma_{Nc}(G)\subseteq\bigcup_{1\leq i\leq n}\Gamma_{Nc}(G_i)\cup\bigcup_{1\leq i, j\leq n, i\neq j} \Gamma(G_i, G_j).$$
\end{theorem}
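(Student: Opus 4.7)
The plan is to argue by induction on $n$, taking Theorem \ref{mut} as the base case $n=2$. For the inductive step, set $H=G_1G_2\cdots G_{n-1}$. Pairwise mutual permutability implies pairwise permutability, so $H$ is a subgroup of $G$ and $G=HG_n$.

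The first (and main) technical step is to verify that $G=HG_n$ is itself a mutually permutable product. This is an associativity-type inheritance property for pairwise mutually permutable products: each partial product $G_{i_1}\cdots G_{i_k}$ forms a subgroup and is mutually permutable with the remaining factor $G_n$. Such regrouping results are standard in the theory (see Chapter 4 of \cite{PFG}), and the hypothesis that each $G_i$ is soluble makes the required structural arguments routine. I expect this verification to be the chief obstacle, since without it the inductive machinery collapses.

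Once $(H,G_n)$ is known to be a mutually permutable pair, Theorem \ref{mut} applied to the factorization $G=HG_n$ yields
\[
\Gamma_{Nc}(G)\subseteq \Gamma_{Nc}(H)\cup \Gamma_{Nc}(G_n)\cup \Gamma(H,G_n),
\]
\[
\Gamma_H(G)\subseteq \Gamma_H(H)\cup \Gamma_H(G_n)\cup \Gamma(H,G_n)\cup\{(p,p)\mid p\in\pi(G)\}.
\]
Since $H$ is a pairwise mutually permutable product of the $n-1$ soluble subgroups $G_1,\dots,G_{n-1}$, the induction hypothesis bounds $\Gamma_{Nc}(H)$ and $\Gamma_H(H)$ by the corresponding unions taken over $1\leq i\leq n-1$, picking up only edges from $\bigcup_{i<n}\Gamma_{\ast}(G_i)$, the cross terms $\Gamma(G_i,G_j)$ for $i\neq j<n$, and (for Hawkes) the loops.

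It remains to absorb $\Gamma(H,G_n)$ into $\bigcup_{i<n}\bigl(\Gamma(G_i,G_n)\cup\Gamma(G_n,G_i)\bigr)$. Because $|H|$ divides $|G_1|\cdots|G_{n-1}|$, we have $\pi(H)\subseteq\bigcup_{i=1}^{n-1}\pi(G_i)$, and then the defining condition for $\Gamma(H,G_n)$, namely that one endpoint lies in $\pi(H)$ and the other in $\pi(G_n)\cap\pi(p-1)$ (or symmetrically), immediately places each such edge into some $\Gamma(G_i,G_n)$ or $\Gamma(G_n,G_i)$. Combining these inclusions gives the desired bounds on $\Gamma_H(G)$ and $\Gamma_{Nc}(G)$.
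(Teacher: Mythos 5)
Your induction hinges entirely on the claim that $H=G_1\cdots G_{n-1}$ and $G_n$ form a mutually permutable product, and that claim is not available: it is not proved in \cite{PFG}, and the paper explicitly flags it as open (see the final remarks: ``The proof of Theorem \ref{mut} is based on the properties of mutually permutable products of 2 subgroups. The analogues of these properties for products of more than 2 subgroups are not known now.''). One half of the required permutability is easy --- for $U\leq G_n$ one has $U G_1\cdots G_{n-1}=G_1\cdots G_{n-1}U$ by shuffling $U$ past each factor --- but the other half fails to follow from the hypotheses: a subgroup of $H$ need not decompose as a product of subgroups of the $G_i$, so there is no mechanism forcing $G_n$ to permute with it. You correctly identify this as ``the chief obstacle'' but then assert it is standard and routine; it is neither, and solubility of the $G_i$ does not obviously rescue it. So the proposal has a genuine gap at its central step.

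The paper takes a different route precisely to avoid this. It first notes $G$ is soluble (\cite[Theorem 4.1.14]{PFG}) and then invokes Lemma \ref{ro2}, which for a soluble product of pairwise \emph{permutable} subgroups gives $\Gamma_{Nc}(G)=\bigcup_{i<j}\Gamma_{Nc}(G_iG_j)$ via the triple-factorization result \cite[Theorem 7.1(1)]{VM} and induction. Each $G_iG_j$ genuinely is a mutually permutable product of two subgroups, so Theorem \ref{mut} applies to it; this yields the $\Gamma_{Nc}$ bound. The $\Gamma_H$ bound is then deduced from the fact that for soluble groups $E(\Gamma_H(G))\setminus E(\Gamma_{Nc}(G))$ consists only of loops, rather than by applying the Hawkes half of Theorem \ref{mut} to a regrouped factorization. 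Your absorption of the cross terms $\Gamma(H,G_n)$ into $\bigcup_{i<n}(\Gamma(G_i,G_n)\cup\Gamma(G_n,G_i))$ is fine as far as it goes, but it sits downstream of the unproved regrouping claim. To repair the argument you would either have to prove that regrouping preserves mutual permutability (which would in fact answer the paper's Question 4 affirmatively in the soluble case by a new method), or switch to the paper's pairwise reduction through Lemma \ref{ro2}.
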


The following result for $n=2$ was proved in \cite[Corollary 7]{Vasilev2017}. 

\begin{corollary}\label{cor41}
Let a group $G$ be a product of pairwise mutually permutable   subgroups $G_1,\dots, G_n$
If every Schmidt subgroup of $G_1, \dots, G_n$ is supersoluble, then every Schmidt subgroup of $G$ is supersoluble.
\end{corollary}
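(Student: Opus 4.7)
The plan is to recast the corollary as a purely graph-theoretic statement about $\Gamma_{Nc}$ and then invoke Theorem~\ref{thm4}. The crucial ingredient is the classical description of supersoluble Schmidt groups: a Schmidt $(p,q)$-group is supersoluble if and only if $q$ divides $p-1$. Consequently, for any group $H$, the assertion ``every Schmidt subgroup of $H$ is supersoluble'' is equivalent to the statement that every edge $(p,q)$ of $\Gamma_{Nc}(H)$ satisfies $q\in\pi(p-1)$.

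With this reformulation the argument is nearly immediate. By hypothesis, every edge of $\bigcup_{i}\Gamma_{Nc}(G_i)$ is of the form $(p,q)$ with $q\in\pi(p-1)$, and by the very definition of $\Gamma(G_i,G_j)$ every one of its edges satisfies the same condition. The inclusion
$$\Gamma_{Nc}(G)\subseteq\bigcup_{i}\Gamma_{Nc}(G_i)\cup\bigcup_{i\neq j}\Gamma(G_i,G_j)$$
supplied by Theorem~\ref{thm4} therefore transfers the property to every edge of $\Gamma_{Nc}(G)$; unwinding the reformulation yields that every Schmidt subgroup of $G$ is supersoluble.

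The one delicate point is that Theorem~\ref{thm4} is stated for \emph{soluble} factors, while the corollary does not assume solubility. I would first check that this is automatic: any non-soluble group fails to be $2$-nilpotent (otherwise its normal $2$-complement would be soluble by Feit--Thompson and its Sylow $2$-subgroup is trivially soluble, forcing solubility of the whole group), so by It\^o's theorem on minimal non-$p$-nilpotent groups it contains a Schmidt $(2,q)$-subgroup for some odd prime $q$; since $q\nmid 2-1$, that subgroup is not supersoluble, contradicting the hypothesis on each $G_i$. This solubility verification is the main, though routine, obstacle; the subsequent graph-theoretic bookkeeping is essentially trivial.
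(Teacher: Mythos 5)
Your proof is correct. The skeleton is the same as the paper's --- both arguments reduce the corollary to Theorem~\ref{thm4} via the $N$-critical graph --- but you justify the two supporting facts differently. For the translation between ``every Schmidt subgroup of $H$ is supersoluble'' and ``every edge $(p,q)$ of $\Gamma_{Nc}(H)$ has $q\in\pi(p-1)$'', you invoke the classical structure of Schmidt groups directly, whereas the paper packages this in the formalism of formations with the Shemetkov property, citing the characterization $\mathfrak{F}=(G\mid\Gamma_{Nc}(G)\subseteq\Gamma_{Nc}(\mathfrak{F}))$ from \cite[Theorem 4.4]{VM} together with $\Gamma_{Nc}(\mathfrak{F})=\Gamma_{Nc}(\mathfrak{U})=\{(p,q)\mid q\in\pi(p-1)\}$. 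For the solubility of the factors $G_i$ (needed to apply Theorem~\ref{thm4}), you argue via Feit--Thompson plus It\^o's theorem that a non-soluble group contains a non-supersoluble Schmidt $(2,q)$-subgroup; the paper instead observes that $\Gamma_{Nc}(\mathfrak{F})$ is acyclic (edges always point from larger to smaller primes) and cites \cite[Theorem 6.2(b)]{VM} to conclude that every $\mathfrak{F}$-group has a Sylow tower. Your route is more self-contained and elementary (modulo Feit--Thompson, which is in any case implicit in the background results); the paper's route exhibits the corollary as an instance of the general Shemetkov-property machinery, which is what also drives its Corollary~\ref{Shemetkov}. Both are valid, and your verification that every edge of $\Gamma(G_i,G_j)$ automatically satisfies $q\in\pi(p-1)$ is exactly the point that makes the whole reduction work.
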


\section{Preliminaries}

Here $\pi(n)$ is the set of prime divisors of $n$; $\pi(\mathfrak{F})=\cup_{G\in\mathfrak{F}}\pi(G)$;   $S_n$ and $A_n$ are the symmetric and the alternating group of degree $n$ respectively; $Z_n$ is the cyclic group of order $n$;    $\Phi(G)$ is the Frattini subgroup of $G$;
 $\mathrm{O}_\pi(G)$ is the greatest normal $\pi$-subgroup of a group $G$ for a set of primes $\pi$. If $\pi=\{p\}$, then $\mathrm{O}_\pi(G)$ is denoted by $\mathrm{O}_p(G)$. If $\pi=\mathbb{P}\setminus\{p\}$, then $\mathrm{O}_\pi(G)$ is denoted by $\mathrm{O}_{p'}(G)$; $\mathrm{O}_{p',p}(G)$ is the greatest normal $p$-nilpotent subgroup of a group $G$. It can be defined by $\mathrm{O}_{p',p}(G)/\mathrm{O}_{p'}(G)=\mathrm{O}_p(G/\mathrm{O}_{p'}(G))$.

Recall  that here a (directed) graph $\Gamma$  is a pair of sets $V(\Gamma)$ and $E(\Gamma)$ where $V(\Gamma)$ is a set of vertices of $\Gamma$ and
$E(\Gamma)$ is a set of edges of $ \Gamma$, i.e. the set of ordered pairs of elements from $V(\Gamma)$.
An edge $(v, v)$ is called a loop. Two graphs $\Gamma_1$ and $\Gamma_2$ are called equal (denoted by $\Gamma_1 = \Gamma_2$) if $V (\Gamma_1) = V (\Gamma_2)$ and $E(\Gamma_1) = E(\Gamma_2)$. Graph
$\Gamma_1$ is called subgraph of $\Gamma_2$ (denoted by $\Gamma_1\subseteq\Gamma_2$) if $V (\Gamma_1) \subseteq V (\Gamma_2)$ and $E(\Gamma_1) \subseteq E(\Gamma_2)$. Graph $\Gamma$ is called
a union of graphs $\Gamma_1$ and $\Gamma_2$ (denoted by $\Gamma = \Gamma_1\cup\Gamma_2)$ if $V(\Gamma) = V (\Gamma_1)\cup V (\Gamma_2)$ and $E(\Gamma) = E(\Gamma_1)\cup E(\Gamma_2)$.

Let $\Gamma\in\{\Gamma_s, \Gamma_H, \Gamma_{Nc}\}$ and $\mathfrak{X}$ be a class of groups. Recall  \cite[Definition 3.1]{VM} that
$$\Gamma(\mathfrak{X})=\bigcup_{G\in\mathfrak{X}}\Gamma(G).$$

\begin{lemma}[{\cite[Theorem 2.7]{VM}}]\label{lem1}
Let $G$ be a group. Then
  \begin{enumerate}
    \item If $\Gamma\in\{\Gamma_H, \Gamma_{Nc}\}$, then $\Gamma(H)\subseteq\Gamma(G)$ for any $H\leq G$.

    \item If $\Gamma\in\{\Gamma_s, \Gamma_H, \Gamma_{Nc}\}$, then $\Gamma(G/N)\subseteq\Gamma(G)$ for any $N\trianglelefteq G$.

    \item If $\Gamma\in\{\Gamma_s, \Gamma_H, \Gamma_{Nc}\}$, then $\Gamma(G/N_1)\cup\Gamma(G/N_2)=\Gamma(G)$ for any $N_1,N_2\trianglelefteq G$ with
       \linebreak $N_1\cap N_2=1$.
       
    \item If $\Gamma\in\{\Gamma_H, \Gamma_{Nc}\}$, then $\Gamma(N_1)\cup\Gamma(N_2)=\Gamma(G)$ for any $N_1,N_2\trianglelefteq G$.
        
        \item If $\Gamma\in\{\Gamma_s, \Gamma_H, \Gamma_{Nc}\}$, then $\Gamma(G_1\times\dots\times G_n)=\Gamma(G_1)\cup\dots\cup\Gamma(G_n)$ for any groups $G_1, \dots,G_n$.  
  \end{enumerate}
\end{lemma}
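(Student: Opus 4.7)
The plan is to treat the five items in order, as each builds on the earlier ones. Throughout I will use the standard facts that $\mathrm{O}_{p',p}(G)$ absorbs every normal $p$-nilpotent subgroup of $G$, that a Schmidt group has a unique normal Sylow subgroup contained in its derived subgroup, and that the Frattini argument applies to Sylow subgroups of normal subgroups.

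For item~(1), the $\Gamma_{Nc}$-case is immediate: a Schmidt $(p,q)$-subgroup of $H$ is a Schmidt $(p,q)$-subgroup of $G$. For $\Gamma_H$, the key observation is that $H\cap\mathrm{O}_{p',p}(G)$ is normal in $H$ and $p$-nilpotent (as a subgroup of $\mathrm{O}_{p',p}(G)$), hence contained in $\mathrm{O}_{p',p}(H)$. The canonical isomorphism $H/(H\cap\mathrm{O}_{p',p}(G))\cong H\mathrm{O}_{p',p}(G)/\mathrm{O}_{p',p}(G)$ then exhibits $H/\mathrm{O}_{p',p}(H)$ as a quotient of a subgroup of $G/\mathrm{O}_{p',p}(G)$, giving the desired inclusion of $\pi$-sets.

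For item~(2), the three graphs require distinct arguments. For $\Gamma_H$: the subgroup $\mathrm{O}_{p',p}(G)N/N$ is normal $p$-nilpotent in $G/N$, hence lies inside $\mathrm{O}_{p',p}(G/N)$, and $(G/N)/\mathrm{O}_{p',p}(G/N)$ is then a quotient of $G/\mathrm{O}_{p',p}(G)$. For $\Gamma_s$: the Frattini argument applied to $P\trianglelefteq PN$ gives $N_{G/N}(PN/N)=N_G(P)N/N$; combining with the trivial inclusion $C_G(P)N/N\leq C_{G/N}(PN/N)$ shows that $N_{G/N}(PN/N)/((PN/N)\,C_{G/N}(PN/N))$ is a quotient of $N_G(P)/PC_G(P)$, yielding the $\pi$-containment. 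For $\Gamma_{Nc}$, the substantive step is to lift a Schmidt $(p,q)$-subgroup $\bar S\leq G/N$ to $G$: choose a Sylow $p$-subgroup $P$ of the preimage of $\bar S$ in $G$, correct a preimage of a $q$-generator of a Sylow $q$-complement of $\bar S$ by a Sylow-normaliser element so that it normalises $P$, and replace by its $q$-part to obtain $y$; then $P\langle y\rangle$ is a $\{p,q\}$-subgroup with normal Sylow $p$-subgroup whose image is non-nilpotent, and any Schmidt subgroup of $P\langle y\rangle$ inherits a normal Sylow $p$-part and so is of type $(p,q)$. This lifting is the main technical obstacle of the lemma.

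For items~(3) and~(4) the forward inclusions come from (2) and (1) respectively. For $\subseteq$ in~(3) with $\Gamma\in\{\Gamma_H,\Gamma_{Nc}\}$, use the embedding $G\hookrightarrow G/N_1\times G/N_2$ coming from $N_1\cap N_2=1$, and combine item~(1) with the direct-product identity~(5). For $\Gamma_s$ in~(3), a direct argument is needed: take a Sylow $p$-subgroup $P$ of $G$ realising the edge $(p,q)$, compare $N_G(P)/PC_G(P)$ with its images modulo $N_1$ and $N_2$, and use that an element of $N_G(P)$ witnessing the $q$-contribution cannot be absorbed into both $PC_G(P)N_1$ and $PC_G(P)N_2$ without falling into $N_1\cap N_2=1$. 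For item~(4), which requires the implicit hypothesis $G=N_1N_2$ (otherwise already $N_2=1$, $N_1\lneq G$ refutes the equality), the $\subseteq$ direction uses the Schmidt-group structure together with $G=N_1N_2$ to place a Schmidt subgroup of $G$ (up to isomorphism) inside one of $N_1$, $N_2$.

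For item~(5), $\Gamma_s$ is handled directly from the identities $N_{G_1\times G_2}(P_1\times P_2)=N_{G_1}(P_1)\times N_{G_2}(P_2)$ and the analogous one for centralisers, which force the edge set to split coordinate-wise. For $\Gamma_H$ and $\Gamma_{Nc}$ the two-factor case follows from item~(4) applied to $N_1=G_1\times\{1\}$ and $N_2=\{1\}\times G_2$ (whose product is the whole group), and the general case by induction on the number of factors. The overall plan's hardest step is the Schmidt-lifting through quotients in~(2), which drives the rest.
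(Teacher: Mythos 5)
The paper does not actually prove this lemma: it is imported wholesale by citation from \cite[Theorem~2.7]{VM}, so there is no in-paper argument to compare yours against and I can only judge your proposal on its own terms. Most of it is sound. Items (1) and (2) are correct (the Schmidt-lifting in (2) via the Frattini argument followed by passing to the $q$-part of the normalising element is exactly the right device), item (5) for $\Gamma_s$ is correct, and your observation that item (4) is false as literally stated without the implicit hypothesis $G=N_1N_2$ is a genuine and worthwhile catch.

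The genuine gap is the reverse inclusion in item (4). For $\Gamma_{Nc}$ your entire argument is the clause ``use the Schmidt-group structure together with $G=N_1N_2$ to place a Schmidt subgroup of $G$ (up to isomorphism) inside one of $N_1$, $N_2$'' --- that is a restatement of the claim, not a proof, and it is the single hardest assertion in the lemma. (A workable route: pass to $M=N_1\cap N_2$, where $G/M\cong N_1/M\times N_2/M$ because $[N_1,N_2]\le M$; if the image of the Schmidt subgroup $S$ is still non-nilpotent, finish with the direct-product case and the lifting from (2); otherwise $S^{\mathfrak{N}}\le M$ and a further Frattini-type argument is required. None of this appears in your text.) For $\Gamma_H$ you give no argument at all for this direction; there one needs $\mathrm{O}_{p',p}(N_i)=N_i\cap\mathrm{O}_{p',p}(G)$, which exhibits $G/\mathrm{O}_{p',p}(G)$ as the product of the images of the $N_i/\mathrm{O}_{p',p}(N_i)$. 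The gap propagates through your stated dependencies: you derive (5) for $\Gamma_H,\Gamma_{Nc}$ from (4), and (3) for $\Gamma_H,\Gamma_{Nc}$ from (5), so three of the five items rest on the unproved step. This is avoidable --- (5) for $\Gamma_{Nc}$ has a direct proof (if both coordinate projections of a Schmidt subgroup were nilpotent it would embed in a nilpotent group, so some projection is a Schmidt $(p,q)$-quotient), after which (3) follows without (4) --- and you should reorganise accordingly. Finally, your argument for (3) with $\Gamma_s$ is misformulated: the hypothesis $(p,q)\notin E(\Gamma_s(G/N_i))$ gives $\bar x\in\bar P\,C_{G/N_i}(\bar P)$, which is not the same as $x\in PC_G(P)N_i$ (the centraliser in the quotient may be strictly larger than $C_G(P)N_i/N_i$), and in any case $HN_1\cap HN_2=H(N_1\cap N_2)$ fails in general. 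The correct invariant is the commutator: for a $q$-element $x\in N_G(P)$ with $q\ne p$ one has $x\in PC_G(P)$ if and only if $[P,x]=1$, and the hypothesis for each $i$ gives $[P,x]\le N_i$, whence $[P,x]\le N_1\cap N_2=1$, the desired contradiction.
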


Let $\mathfrak{X}$ be a class of groups. Recall that a chief factor $H/K$ of  $G$ is called   $\mathfrak{X}$-\emph{central} (see \cite[p. 127--128]{s6}) in $G$, for a class of groups $\mathfrak{X}$,  provided  that   the semidirect product $(H/K)\rtimes (G/C_G(H/K))$ of $H/K$ with $G/C_G(H/K)$ corresponding to the action by conjugation of $G$ on $H/K$ belongs $\mathfrak{X}$. The $\mathfrak{X}$-\emph{hypercenter} $\mathrm{Z}_\mathfrak{X}(G)$  of $G$ is the greatest normal subgroup of $G$ such that every chief factor of $G$ below it is $\mathfrak{X}$-central (it exists according to \cite[Lemma 14.1]{s6}). If $\mathfrak{X}=\mathfrak{N}$ is the class of all nilpotent groups, then $\mathrm{Z}_\mathfrak{N}(G)$ is the hypercenter $\mathrm{Z}_\infty(G)$ of $G$.

\section{The proof of Theorems \ref{thm1} and \ref{thm2}}

Recall \cite[Proposition 1(8)]{Hauck2003} that if $G=G_1\dots G_n$ is the product of pairwise permutable and $\mathfrak{N}$-connected subgroups, then  $[G_i, \prod_{j=1, j\neq i}^nG_j]\leq \mathrm{Z}_\infty(G)$ for any $i\in \{1,\dots, n\}$. According to \cite[Lemma 4.2.12]{PFG} if $G=G_1\dots G_n$ is the product of totally permutable  subgroups, then  $[G_i, \prod_{j=1, j\neq i}^nG_j]\leq \mathrm{Z}_\mathfrak{U}(G)$ for any $i\in \{1,\dots, n\}$ where $\mathfrak{U}$ stands for the class of all supersoluble groups. These observations lead us to the following definition.

\begin{definition}
We say that $G$ is the product of subgroups $G_1, G_2\dots, G_n$ with $\mathfrak{F}$-hypercentral condition for commutators if $G=G_1\dots G_n$,  $G_iG_j$ is a subgroup of $G$ for every $i, j\in\{1, \dots, n\}$ and $[G_i, \prod_{j=1, j\neq i}^nG_j]\leq \mathrm{Z}_\mathfrak{F}(G)$ for any $i\in \{1,\dots, n\}$.
\end{definition}

The main property of products with $\mathfrak{F}$-hypercentral condition for commutators  is

\begin{lemma}\label{lemma1}
  Let $\mathfrak{F}$ be a hereditary formation with $\mathfrak{N}\subseteq \mathfrak{F}$. If a group $G$ is the product of subgroups $G_1,\dots, G_n$ with $\mathfrak{F}$-hypercentral condition for commutators, then
 $$G/\mathrm{Z}_\mathfrak{F}(G)\simeq G_1/\mathrm{Z}_\mathfrak{F}(G_1)\times\dots\times G_n/\mathrm{Z}_\mathfrak{F}(G_n).$$
\end{lemma}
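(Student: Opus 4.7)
The plan is to pass to $\bar G := G/Z$ with $Z := \mathrm{Z}_\mathfrak{F}(G)$ and exploit the commutator hypothesis there. The condition $[G_i,\prod_{j\neq i}G_j]\leq Z$ yields $[\bar G_i,\bar G_j]=1$ for $i\neq j$, where $\bar G_i := G_iZ/Z$, so $\bar G$ is the product of the pairwise-centralizing normal subgroups $\bar G_1,\dots,\bar G_n$. First I would show that $\mathrm{Z}_\mathfrak{F}(\bar G)=1$: any minimal normal subgroup $M/Z$ of $\bar G$ contained in $\mathrm{Z}_\mathfrak{F}(\bar G)$ satisfies $C_{\bar G}(M/Z)=C_G(M/Z)/Z$, so the semidirect product of $M/Z$ with $\bar G/C_{\bar G}(M/Z)=G/C_G(M/Z)$ lies in $\mathfrak{F}$, meaning $M/Z$ is $\mathfrak{F}$-central in $G$ too and hence $M\leq Z$ by maximality of $Z$, a contradiction. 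Because $\mathfrak{N}\subseteq\mathfrak{F}$, this in particular forces $\mathrm{Z}(\bar G)=1$. Any element of $\bar G_i\cap\prod_{j\neq i}\bar G_j$ is centralized by both $\bar G_i$ and $\prod_{j\neq i}\bar G_j$, hence is central, hence trivial, so $\bar G=\bar G_1\times\cdots\times\bar G_n$.

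It then remains to identify $\bar G_i\cong G_i/(G_i\cap Z)$ with $G_i/\mathrm{Z}_\mathfrak{F}(G_i)$, i.e.\ to establish $G_i\cap Z=\mathrm{Z}_\mathfrak{F}(G_i)$. For the inclusion $\mathrm{Z}_\mathfrak{F}(G_i)\leq G_i\cap Z$ I would invoke the direct decomposition just obtained together with the standard fact that the $\mathfrak{F}$-hypercenter distributes over finite direct products (valid for any formation containing $\mathfrak{N}$, since chief factors of a direct product either lie in a single factor or are central, and central chief factors are automatically $\mathfrak{F}$-central once $\mathfrak{N}\subseteq\mathfrak{F}$). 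This gives $\mathrm{Z}_\mathfrak{F}(\bar G_i)\leq\mathrm{Z}_\mathfrak{F}(\bar G)=1$; combined with the fact that the canonical quotient $G_i\to\bar G_i$ sends $\mathrm{Z}_\mathfrak{F}(G_i)$ into $\mathrm{Z}_\mathfrak{F}(\bar G_i)$, this yields the claim.

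The main obstacle I expect is the reverse inclusion $G_i\cap Z\leq\mathrm{Z}_\mathfrak{F}(G_i)$, because $G_i$ need not be normal in $G$. My plan is to take a chief series of $G$ refined through $Z$, intersect it with $G_i$ to obtain a $G_i$-invariant series of $G_i\cap Z$, and then refine this to a chief series of $G_i$. If $H/K$ is an $\mathfrak{F}$-central chief factor of $G$ with $H\leq Z$, then $(H\cap G_i)/(K\cap G_i)$ embeds as a $G_i$-module into $H/K$, and $G_i/C_{G_i}((H\cap G_i)/(K\cap G_i))$ is a quotient of a subgroup of $G/C_G(H/K)$; by the hereditariness and quotient-closure of $\mathfrak{F}$ the corresponding semidirect product lies in $\mathfrak{F}$. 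The same closure properties carry $\mathfrak{F}$-centrality down to the chief factors of $G_i$ in the refinement, establishing $G_i\cap Z\leq\mathrm{Z}_\mathfrak{F}(G_i)$ and completing the identification.
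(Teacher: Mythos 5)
Your argument is correct and follows essentially the same route as the paper: pass to $G/\mathrm{Z}_\mathfrak{F}(G)$, observe that the images of the $G_i$ are normal with central (hence trivial) pairwise intersections to get the direct decomposition, and then identify $G_i\cap\mathrm{Z}_\mathfrak{F}(G)=\mathrm{Z}_\mathfrak{F}(G_i)$ via $\mathrm{Z}_\mathfrak{F}(G_i\mathrm{Z}_\mathfrak{F}(G)/\mathrm{Z}_\mathfrak{F}(G))=1$ together with the inclusion $G_i\cap\mathrm{Z}_\mathfrak{F}(G)\leq\mathrm{Z}_\mathfrak{F}(G_i)$ for hereditary $\mathfrak{F}$. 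The only difference is that you sketch a proof of that last inclusion by refining a chief series, whereas the paper simply cites it as a known lemma (Lemma~2.4(iii) of Aivazidis et al.); your sketch is the standard argument behind that citation.
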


\begin{proof}
$(a)$
 $\overline{H}_i=G_i\mathrm{Z}_\mathfrak{F}(G)/\mathrm{Z}_\mathfrak{F}(G) \cap (\prod_{j=1, j\neq i}^nG_j)\mathrm{Z}_\mathfrak{F}(G)/\mathrm{Z}_\mathfrak{F}(G)\simeq 1$ for any $i\in\{1,\dots,n\}$.

  Since $G$ satisfies  $\mathfrak{F}$-hypercentral condition for commutators, we see that every element of $\overline{H}_i$ commutes with every element of  $G_i\mathrm{Z}_\mathfrak{F}(G)/\mathrm{Z}_\mathfrak{F}(G)$ and $(\prod_{j=1, j\neq i}^nG_j)\mathrm{Z}_\mathfrak{F}(G)/\mathrm{Z}_\mathfrak{F}(G)$. Hence it commutes with every element of $G/\mathrm{Z}_\mathfrak{F}(G)$. Therefore $\overline{H}_i\leq \mathrm{Z}(G/\mathrm{Z}_\mathfrak{F}(G))$. From $\mathfrak{N}\subseteq\mathfrak{F}$ it follows that  $\mathrm{Z}(G/\mathrm{Z}_\mathfrak{F}(G))\leq \mathrm{Z}_\mathfrak{F}(G/\mathrm{Z}_\mathfrak{F}(G))\simeq 1$. Thus $\overline{H}_i\simeq 1$.

$(b)$ $G_i\mathrm{Z}_\mathfrak{F}(G)/\mathrm{Z}_\mathfrak{F}(G)\trianglelefteq G/\mathrm{Z}_\mathfrak{F}(G)$ for any $i\in\{1,\dots,n\}$.

Since $G$ satisfies  $\mathfrak{F}$-hypercentral condition for commutators, we see that every element of  $G_i\mathrm{Z}_\mathfrak{F}(G)/\mathrm{Z}_\mathfrak{F}(G)$ commutes with every element of $(\prod_{j=1, j\neq i}^nG_j)\mathrm{Z}_\mathfrak{F}(G)/\mathrm{Z}_\mathfrak{F}(G)$. Now from $G=G_1\dots G_n$ it follows that $G_i\mathrm{Z}_\mathfrak{F}(G)/\mathrm{Z}_\mathfrak{F}(G)\trianglelefteq G/\mathrm{Z}_\mathfrak{F}(G)$.

$(c)$ $G/\mathrm{Z}_\mathfrak{F}(G)\simeq G_1/\mathrm{Z}_\mathfrak{F}(G_1)\times\dots\times G_n/\mathrm{Z}_\mathfrak{F}(G_n)$.

Now from $(a)$ and $(b)$ it follows that   $$G/\mathrm{Z}_\mathfrak{F}(G)= G_1\mathrm{Z}_\mathfrak{F}(G)/\mathrm{Z}_\mathfrak{F}(G)\times\dots\times G_n\mathrm{Z}_\mathfrak{F}(G)/\mathrm{Z}_\mathfrak{F}(G).$$
Note that every $\mathfrak{F}$-central chief factor of  $G_i\mathrm{Z}_\mathfrak{F}(G)/\mathrm{Z}_\mathfrak{F}(G)$ is an $\mathfrak{F}$-central chief factor of  $G/\mathrm{Z}_\mathfrak{F}(G)$. From $\mathrm{Z}_\mathfrak{F}(G/\mathrm{Z}_\mathfrak{F}(G))\simeq 1$ it follows that $$\mathrm{Z}_\mathfrak{F}(G_i\mathrm{Z}_\mathfrak{F}(G)/\mathrm{Z}_\mathfrak{F}(G))\simeq \mathrm{Z}_\mathfrak{F}(G_i/(G_i\cap \mathrm{Z}_\mathfrak{F}(G)))\simeq 1.$$ Since $\mathfrak{F}$ is hereditary, we see that $G_i\cap \mathrm{Z}_\mathfrak{F}(G)\leq \mathrm{Z}_\mathfrak{F}(G_i)$ by \cite[Lemma 2.4(iii)]{Aivazidis2021}. Now from $\mathrm{Z}_\mathfrak{F}(G_i/(G_i\cap \mathrm{Z}_\mathfrak{F}(G)))\simeq 1$ it follows that $G_i\cap \mathrm{Z}_\mathfrak{F}(G)= \mathrm{Z}_\mathfrak{F}(G_i)$. Thus
$$G/\mathrm{Z}_\mathfrak{F}(G)\simeq G_1/(G_1\cap \mathrm{Z}_\mathfrak{F}(G))\times\dots\times G_n/(G_n\cap \mathrm{Z}_\mathfrak{F}(G_n))\simeq G_1/\mathrm{Z}_\mathfrak{F}(G_1)\times\dots\times G_n/\mathrm{Z}_\mathfrak{F}(G_n).$$
Lemma is proved.\end{proof}

Denote by $\Gamma(\mathfrak{F})_{|G}$   the induced subgraph of $\Gamma(\mathfrak{F})$ on $\pi(G)$.

\begin{lemma}\label{lemma2}
  Let $\mathfrak{F}$ be a hereditary formation with $\mathfrak{N}\subseteq \mathfrak{F}$, $\Gamma\in\{\Gamma_s, \Gamma_{Nc}, \Gamma_H\}$ and $G$ be a group. Then
  $$\Gamma(G/\mathrm{Z}_\mathfrak{F}(G))\subseteq\Gamma(G)\subseteq\Gamma(G/\mathrm{Z}_\mathfrak{F}(G))\cup\Gamma(\mathfrak{F})_{|G}. $$
\end{lemma}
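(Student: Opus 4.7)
The first inclusion $\Gamma(G/\mathrm{Z}_\mathfrak{F}(G))\subseteq\Gamma(G)$ is immediate from Lemma \ref{lem1}(2), so the interesting half is the opposite one. Write $Z=\mathrm{Z}_\mathfrak{F}(G)$. My plan is, given an edge $(p,q)\in\Gamma(G)\setminus\Gamma(G/Z)$, to exhibit a group in $\mathfrak{F}$ with $p,q\in\pi(G)$ realizing $(p,q)$, placing the edge into $\Gamma(\mathfrak{F})_{|G}$.

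I treat $\Gamma=\Gamma_{Nc}$ in detail. Let $H=PQ$ be a Schmidt $(p,q)$-subgroup of $G$ with normal Sylow $p$-subgroup $P$ and cyclic Sylow $q$-subgroup $Q$, and put $L=H\cap Z\trianglelefteq H$. The key structural input is that the normal subgroups of a Schmidt group are easily classified via the primitive quotient $H/\Phi(H)\cong V\rtimes C_q$ (with $V=P/\Phi(P)$ irreducible and faithful under $C_q$): a normal subgroup of $H$ is either contained in $\Phi(H)$, equal to $P$ (possible only when $|Q|=q$), or equal to $H$. If $L\leq\Phi(H)$, then $HZ/Z\cong H/L$ surjects onto the Schmidt group $H/\Phi(H)$ and is itself a Schmidt $(p,q)$-subgroup of $G/Z$, giving $(p,q)\in\Gamma_{Nc}(G/Z)$, contrary to assumption. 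Hence $L\in\{P,H\}$, and in either case $P\leq Z$.

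Since $Q$ acts non-trivially on $P$ (otherwise $H$ would be nilpotent), a refinement of a $G$-chief series through $P$ supplies a $G$-chief factor $W=K_1/K_2$ with $K_1\leq P$ on which some element of $Q$ acts non-trivially. Because $W\leq Z$, the factor $W$ is $\mathfrak{F}$-central and hence $W\rtimes(G/C_G(W))\in\mathfrak{F}$. Picking $\bar t\in G/C_G(W)$ of order $q$ with non-trivial action on $W$ and a minimal $\langle\bar t\rangle$-invariant subgroup $W_0\leq W$ not centralized by $\bar t$, the semidirect product $W_0\rtimes\langle\bar t\rangle$ is a Schmidt $(p,q)$-group (the coprime action of $\langle\bar t\rangle$ on $W_0$ is completely reducible, which combined with minimality forces $W_0$ to be irreducible under $\langle\bar t\rangle$), and it lies in $\mathfrak{F}$ by hereditariness. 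This yields $(p,q)\in\Gamma_{Nc}(\mathfrak{F})_{|G}$.

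For $\Gamma\in\{\Gamma_s,\Gamma_H\}$ the same dichotomy applies with the Schmidt witness replaced by the appropriate structural one: for $\Gamma_s$, a $q$-element $x\in N_G(P)$ acting non-trivially on $P/\Phi(P)$; for $\Gamma_H$, a failure of $p$-nilpotence detected by $G/\mathrm{O}_{p',p}(G)$ (with loops $(p,p)$ handled by the same machinery applied to two consecutive $\mathfrak{F}$-central $p$-chief factors witnessing $p$-length $\geq 2$). In each case, either the witness persists in $G/Z$ and the edge lies in $\Gamma(G/Z)$, or it is absorbed into $Z$, after which the $\mathfrak{F}$-central chief-factor extraction above produces a group in $\mathfrak{F}$ realizing the edge. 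The main obstacle I expect is the Schmidt-group normal-subgroup classification forcing $P\leq Z$ in the non-trivial branch; once that is in hand, the chief-factor extraction is a routine use of the definition of $\mathrm{Z}_\mathfrak{F}(G)$.
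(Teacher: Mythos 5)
Your first inclusion and the overall shape of the second (push the edge into $G/\mathrm{Z}_\mathfrak{F}(G)$ or into an $\mathfrak{F}$-group) are right, but your route is substantially more complicated than the paper's and has genuine gaps. The paper does not rebuild a Schmidt group inside a chief-factor semidirect product at all for $\Gamma_{Nc}$ and $\Gamma_s$: it observes that if the edge dies in $G/Z$ then the witnessing subgroup $X$ (the Schmidt subgroup $H$, resp.\ $P\langle x\rangle$) becomes nilpotent modulo $Z$, that $X\cap Z\le\mathrm{Z}_\mathfrak{F}(X)$ because $\mathfrak{F}$ is hereditary (the cited lemma of Aivazidis et al.), and that $\mathfrak{N}\subseteq\mathfrak{F}$ then forces $X\in\mathfrak{F}$ --- so the \emph{original} witness already realizes $(p,q)$ in $\Gamma(\mathfrak{F})_{|G}$. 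Only for $\Gamma_H$ does it pass to an $\mathfrak{F}$-central chief factor, and there it quotes a characterization of Hawkes edges via chief factors (\cite[Proposition 2.3(1)]{VM}) rather than reconstructing anything.

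Concretely, the gaps in your version are these. (i) Your classification of normal subgroups of a Schmidt group is false: in $C_7\rtimes C_9$ (kernel of the action $C_3$) the subgroup $C_{21}$ is normal, not contained in $\Phi(H)$, and is neither $P$ nor $H$; also $P$ is always normal, not only when $|Q|=q$. The dichotomy you actually need --- $H/(H\cap Z)$ is either a Schmidt $(p,q)$-group or nilpotent, and in the latter case $P=[P,Q]\le Z$ --- is true, but should be obtained from "every quotient of a Schmidt group is nilpotent or Schmidt" plus $[P,Q]=P$, not from that list. (ii) There is no "$G$-chief series through $P$", since $P\not\trianglelefteq G$; you must intersect $P$ with a chief series of $G$ below $Z$ and invoke coprime stabilization to find a factor $Z_i/Z_{i-1}$ on which $Q$ acts non-trivially via the $p$-subgroup $(P\cap Z_i)Z_{i-1}/Z_{i-1}$. (iii) That chief factor $W$ need not be abelian ($\mathfrak{F}$ is an arbitrary hereditary formation containing $\mathfrak{N}$, so $\mathfrak{F}$-central chief factors can be non-abelian, e.g.\ when $\mathfrak{F}$ is large), and then "the coprime action of $\langle\bar t\rangle$ on $W_0$ is completely reducible" is meaningless and $W_0\rtimes\langle\bar t\rangle$ need not be Schmidt; this is repairable (work with the image of $P\cap Z_i$ and use that a non-nilpotent $p$-closed $\{p,q\}$-group contains a Schmidt $(p,q)$-subgroup), but as written the step fails. (iv) The $\Gamma_s$ and $\Gamma_H$ cases are only gestured at; in particular your treatment of loops via "two consecutive $\mathfrak{F}$-central $p$-chief factors witnessing $p$-length $\ge 2$" does not produce a group in $\mathfrak{F}$ with $(p,p)$ in its Hawkes graph --- what is needed (and what the paper extracts from \cite[Proposition 2.3(1)]{VM}) is a single $\mathfrak{F}$-central chief factor $H/K$ with $p\in\pi(H/K)\cap\pi(G/C_G(H/K))$.
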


\begin{proof}
  From 2 of Lemma \ref{lem1} it follows that $\Gamma(G/\mathrm{Z}_\mathfrak{F}(G))\subseteq\Gamma(G)$. Assume that there is a group $G$ with $\Gamma(G)\not\subseteq\Gamma(G/\mathrm{Z}_\mathfrak{F}(G))\cup\Gamma(\mathfrak{F})_{|G}$.  Note that $V(\Gamma(G))=V(\Gamma(G/\mathrm{Z}_\mathfrak{F}(G))\cup\Gamma(\mathfrak{F})_{|G})$. Hence there is $(p, q)\in E(\Gamma(G))\setminus E(\Gamma(G/\mathrm{Z}_\mathfrak{F}(G))\cup\Gamma(\mathfrak{F})_{|G})$.

  Let $\Gamma=\Gamma_{Nc}$. It means there is a Schmidt $(p, q)$-subgroup $H$ of $G$ with $H\not\in \mathfrak{F}$. From $\Gamma_{Nc}(G/\mathrm{Z}_\mathfrak{F}(G))\subseteq\Gamma_{Nc}(G)$ it follows that $H\mathrm{Z}_\mathfrak{F}(G)/\mathrm{Z}_\mathfrak{F}(G)\simeq H/H\cap\mathrm{Z}_\mathfrak{F}(G) $ is nilpotent. Since $\mathfrak{F}$ is hereditary, $H\cap \mathrm{Z}_\mathfrak{F}(G)\leq \mathrm{Z}_\mathfrak{F}(H)$ by \cite[Lemma 2.4(iii)]{Aivazidis2021}. From $\mathfrak{N}\subseteq\mathfrak{F}$ it follows that $H\mathrm{Z}_\mathfrak{F}(G)\in\mathfrak{F}$. Therefore $H\in\mathfrak{F}$, a contradiction.  Thus $\Gamma_{Nc}(G)\subseteq\Gamma_{Nc}(G/\mathrm{Z}_\mathfrak{F}(G))\cup\Gamma_{Nc}(\mathfrak{F})_{|G}$.

 Let $\Gamma=\Gamma_{s}$. Then there is an element $x$ of $G$ which induces an automorphisms of order $q^\alpha$ on a Sylow $p$-subgroup $P$ of $G$. WLOG we may assume that $x$ is a $q$-element of $G$. Note that $x\mathrm{Z}_\mathfrak{F}(G)$ acts trivially on a Sylow $p$-subgroup $P\mathrm{Z}_\mathfrak{F}(G)/\mathrm{Z}_\mathfrak{F}(G)$ of $G/\mathrm{Z}_\mathfrak{F}(G)$. Hence $P\langle x\rangle \mathrm{Z}_\mathfrak{F}(G)/\mathrm{Z}_\mathfrak{F}(G)$ is a nilpotent group. By analogy with the previous paragraph, $P\langle x\rangle \mathrm{Z}_\mathfrak{F}(G)\in\mathfrak{F}$. Hence $(p, q)\in\Gamma_s(\mathfrak{F})$, a contradiction.

Let $\Gamma=\Gamma_{H}$. From \cite[Proposition 2.3(1)]{VM} it follows that there is a chief factor $H/K$ of $G$ below $\mathrm{Z}_\mathfrak{F}(G)$ with  $p\in\pi(H/K)$ and $q\in \pi(G/C_G(H/K))$. From $(H/K)\rtimes G/C_G(H/K)\in\mathfrak{F}$ it follows that $(p, q)\in\Gamma_H(\mathfrak{F})$, a contradiction.
Thus $\Gamma_{H}(G)\subseteq\Gamma_{H}(G/\mathrm{Z}_\mathfrak{F}(G))\cup\Gamma_{H}(\mathfrak{F})_{|G}$.
\end{proof}

The main result of this section is

\begin{theorem}\label{thm}
  Let $\mathfrak{F}$ be a hereditary formation with $\mathfrak{N}\subseteq \mathfrak{F}$ and $\Gamma\in\{\Gamma_s, \Gamma_{Nc}, \Gamma_H\}$. If a group $G$ is the product of subgroups $G_1,\dots, G_n$ with $\mathfrak{F}$-hypercentral condition for commutators, then

  $$\Gamma(G)\subseteq \bigcup_{i=1}^n\Gamma(G_i)\cup\Gamma(\mathfrak{F})_{|G}. $$
\end{theorem}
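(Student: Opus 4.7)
The plan is to assemble the conclusion from the three structural results already proved in this section. The first step is to invoke Lemma \ref{lemma2}, which gives the inclusion
$\Gamma(G)\subseteq \Gamma(G/\mathrm{Z}_\mathfrak{F}(G))\cup\Gamma(\mathfrak{F})_{|G}$
for each of $\Gamma\in\{\Gamma_s,\Gamma_{Nc},\Gamma_H\}$. This immediately isolates the ``exceptional'' part $\Gamma(\mathfrak{F})_{|G}$ and reduces the problem to controlling $\Gamma(G/\mathrm{Z}_\mathfrak{F}(G))$.

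Next I would feed the hypothesis that $G=G_1\cdots G_n$ satisfies the $\mathfrak{F}$-hypercentral condition for commutators into Lemma \ref{lemma1}. Since $\mathfrak{F}$ is a hereditary formation containing $\mathfrak{N}$, that lemma yields the isomorphism
$G/\mathrm{Z}_\mathfrak{F}(G)\simeq G_1/\mathrm{Z}_\mathfrak{F}(G_1)\times\dots\times G_n/\mathrm{Z}_\mathfrak{F}(G_n)$.
Then part (5) of Lemma \ref{lem1} turns $\Gamma$ of this direct product into a union:
$\Gamma(G/\mathrm{Z}_\mathfrak{F}(G))=\bigcup_{i=1}^n\Gamma(G_i/\mathrm{Z}_\mathfrak{F}(G_i))$.

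Finally, part (2) of Lemma \ref{lem1} gives $\Gamma(G_i/\mathrm{Z}_\mathfrak{F}(G_i))\subseteq \Gamma(G_i)$ for each $i$, since $\mathrm{Z}_\mathfrak{F}(G_i)$ is a normal subgroup of $G_i$. Chaining the inclusions produces
$\Gamma(G)\subseteq\bigcup_{i=1}^n\Gamma(G_i)\cup\Gamma(\mathfrak{F})_{|G}$,
which is exactly the claim.

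The work is essentially bookkeeping: the real content sits in Lemmas \ref{lemma1} and \ref{lemma2}, both of which have already been established. The main thing to watch is that the hypotheses of those two lemmas (\emph{hereditary} formation containing $\mathfrak{N}$, plus the $\mathfrak{F}$-hypercentral condition) are precisely what the theorem assumes, so no additional reductions or case splits on $\Gamma$ are needed once those lemmas are in hand; the argument is uniform across $\Gamma_s$, $\Gamma_H$, and $\Gamma_{Nc}$.
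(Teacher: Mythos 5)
Your proof is correct and follows essentially the same route as the paper: Lemma \ref{lemma2} to pass to $G/\mathrm{Z}_\mathfrak{F}(G)$, Lemma \ref{lemma1} to split that quotient as a direct product, and part 5 of Lemma \ref{lem1} to distribute $\Gamma$ over the factors. The only cosmetic difference is that the paper records the two-sided equality $\Gamma(G)\cup\Gamma(\mathfrak{F})_{|G}=\bigcup_{i=1}^n\Gamma(G_i)\cup\Gamma(\mathfrak{F})_{|G}$ (which it reuses in the proof of Theorem \ref{thm1}), whereas you only need the forward inclusion, obtained via part 2 of Lemma \ref{lem1} in place of a second application of Lemma \ref{lemma2} to each $G_i$.
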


\begin{proof}
  From Lemma \ref{lemma1} it follows that
  $$G/\mathrm{Z}_\mathfrak{F}(G)\simeq G_1/\mathrm{Z}_\mathfrak{F}(G_1)\times\dots\times G_n/\mathrm{Z}_\mathfrak{F}(G_n).$$
  Now $\Gamma(G/\mathrm{Z}_\mathfrak{F}(G))=\cup_{i=1}^n\Gamma(G_i/\mathrm{Z}_\mathfrak{F}(G_i))$ by 5 of Lemma \ref{lem1}. Note that $\Gamma(\mathfrak{F})_{|G_i}\subseteq\Gamma(\mathfrak{F})_{|G}$.  Therefore by Lemma \ref{lemma2}
\begin{multline*}
  \Gamma(G)\cup\Gamma(\mathfrak{F})_{|G}=
  \Gamma(G/\mathrm{Z}_\mathfrak{F}(G))\cup\Gamma(\mathfrak{F})_{|G}=\\
\bigcup_{i=1}^n\Gamma(G_i/\mathrm{Z}_\mathfrak{F}(G_i))\cup\Gamma(\mathfrak{F})_{|G}
=\bigcup_{i=1}^n(\Gamma(G_i/\mathrm{Z}_\mathfrak{F}(G_i))\cup\Gamma(\mathfrak{F})_{|G_i}) \cup\Gamma(\mathfrak{F})_{|G}\\
=\bigcup_{i=1}^n(\Gamma(G_i)\cup\Gamma(\mathfrak{F})_{|G_i}) \cup\Gamma(\mathfrak{F})_{|G}
=\bigcup_{i=1}^n\Gamma(G_i) \cup\Gamma(\mathfrak{F})_{|G}\end{multline*}
Thus $\Gamma(G)\subseteq \cup_{i=1}^n\Gamma(G_i)\cup\Gamma(\mathfrak{F})_{|G} $.\end{proof}

\begin{lemma}\label{equal}
  If a group $G$ has a Sylow tower, then $\Gamma_s(G)=\Gamma_{Nc}(G)=\Gamma_H(G)$.
\end{lemma}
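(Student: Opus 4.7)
The plan is to deduce equality of all three graphs by establishing the chain of inclusions
$$\Gamma_s(G)\subseteq\Gamma_{Nc}(G)\subseteq\Gamma_H(G)\subseteq\Gamma_s(G),$$
in which the first two will be valid for an arbitrary finite group, and only the third will use the Sylow-tower hypothesis.

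For $\Gamma_{Nc}(G)\subseteq\Gamma_H(G)$ I would observe that for any Schmidt $(p,q)$-subgroup $H$ of $G$ one has $\mathrm{O}_{p',p}(H)=\mathrm{O}_p(H)$, the normal Sylow $p$-subgroup of $H$ (since $H$ is not $p$-nilpotent and $\mathrm{O}_{p'}(H)=1$); hence $q\in\pi(H/\mathrm{O}_{p',p}(H))$, so $(p,q)\in\Gamma_H(H)$, and Lemma \ref{lem1}(1) transfers the edge to $\Gamma_H(G)$. For $\Gamma_s(G)\subseteq\Gamma_{Nc}(G)$ I would start from $(p,q)\in\Gamma_s(G)$, pick a Sylow $p$-subgroup $P$ and lift a non-trivial $q$-element of $N_G(P)/PC_G(P)$ to a $q$-element $x\in N_G(P)$. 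Since $P$ is a Sylow $p$-subgroup of $N_G(P)$, that quotient has $p'$-order, so $q\ne p$ and therefore $x\notin C_G(P)$. Inside $P\langle x\rangle$, any minimal non-nilpotent subgroup is a Schmidt group involving exactly the primes $\{p,q\}$, and its Sylow $p$-part, sitting inside the normal subgroup $P$, is normal in it; this gives a Schmidt $(p,q)$-subgroup of $G$.

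The main step is $\Gamma_H(G)\subseteq\Gamma_s(G)$, and it is here that I would use the Sylow tower. Fix a Sylow-tower ordering $p_1>\dots>p_n$ of $\pi(G)$, take $(p,q)\in\Gamma_H(G)$ with $p=p_k$, and let $\bar G=G/\mathrm{O}_{p'}(G)$. The first preparatory step is to show that $\bar P=P\mathrm{O}_{p'}(G)/\mathrm{O}_{p'}(G)$ is a \emph{normal} Sylow $p$-subgroup of $\bar G$: the normal Hall $\{p_1,\dots,p_{k-1}\}$-subgroup $H_{k-1}$ of $G$ given by the Sylow tower is a $p'$-group, so $H_{k-1}\le\mathrm{O}_{p'}(G)$, and $G/H_{k-1}$ already has a normal Sylow $p$-subgroup; this property is preserved under passage to the further quotient $\bar G$. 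Consequently $\bar P=\mathrm{O}_p(\bar G)=\mathrm{O}_{p',p}(G)/\mathrm{O}_{p'}(G)$, the quotient $\bar G/\bar P\simeq G/\mathrm{O}_{p',p}(G)$ is a $p'$-group (so $q\ne p$), and solubility of $\bar G$ together with $\mathrm{O}_{p'}(\bar G)=1$ gives $C_{\bar G}(\bar P)\le\bar P$.

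To descend back to $G$ I would then apply the Frattini argument to the normal subgroup $\mathrm{O}_{p',p}(G)=P\mathrm{O}_{p'}(G)$ of $G$ and its Sylow $p$-subgroup $P$: this yields $G=\mathrm{O}_{p',p}(G)\,N_G(P)$, so $N_G(P)$ surjects onto the $p'$-quotient $G/\mathrm{O}_{p',p}(G)$. Since $q$ divides the order of this quotient, one can extract a $q$-element $x\in N_G(P)$ whose image in $\bar G/\bar P$ is non-trivial. If $x$ were to lie in $PC_G(P)$ then, being a $q$-element with $q\ne p$, it would centralise $P$, and $\bar x$ would be a non-trivial $q$-element of $C_{\bar G}(\bar P)\le\bar P$, contradicting that $\bar P$ is a $p$-group; hence $x\in N_G(P)\setminus PC_G(P)$ witnesses $(p,q)\in\Gamma_s(G)$. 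I expect this descent from the quotient $\bar G$ back to a Sylow $p$-subgroup of $G$ itself to be the main obstacle; the preceding two inclusions are essentially formal consequences of the definitions and the subgroup-monotonicity of $\Gamma_H$.
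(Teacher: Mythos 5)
Your proof is correct overall, but it takes a genuinely different route on the only nontrivial inclusion. The paper disposes of $\Gamma_s(G)\subseteq\Gamma_{Nc}(G)\subseteq\Gamma_H(G)$ by citing \cite[Proposition 2.4]{VM} and then proves $\Gamma_H(G)\subseteq\Gamma_s(G)$ by a minimal-counterexample induction: it passes to $G/N$ for a minimal normal subgroup $N$, uses Lemma \ref{lem1}(3) to force $N$ to be the unique minimal normal subgroup and a $p$-group, concludes that the Sylow $p$-subgroup $P$ is itself normal in $G$, and then derives a contradiction from $Q\leq C_G(P)\leq \mathrm{O}_{p',p}(G)$. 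You instead argue directly, with no induction: the tower gives $\mathrm{O}_{p',p}(G)=P\mathrm{O}_{p'}(G)$, the Frattini argument gives $G=\mathrm{O}_{p',p}(G)N_G(P)$ and hence a $q$-element $x\in N_G(P)$ nontrivial modulo $\mathrm{O}_{p',p}(G)$, and $C_{\bar G}(\bar P)\leq\bar P$ (i.e.\ $C_G(F(G))\le F(G)$ for the soluble group $\bar G=G/\mathrm{O}_{p'}(G)$) rules out $x\in PC_G(P)$. Your argument is more constructive and avoids the case analysis on minimal normal subgroups, at the price of invoking the Fitting-subgroup self-centralizing property; the paper's version stays closer to the bare definitions. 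One cosmetic point: you fix a decreasing ordering $p_1>\dots>p_n$, but nothing in your argument uses monotonicity of the ordering, and the paper's hypothesis is just the existence of some Sylow tower, so you should phrase it for an arbitrary tower ordering.

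There is one small but genuine error in your justification of $\Gamma_{Nc}(G)\subseteq\Gamma_H(G)$: for a Schmidt $(p,q)$-group $H=P\rtimes\langle y\rangle$ one has $\langle y^q\rangle\leq \mathrm{Z}(H)$, so $\mathrm{O}_{p'}(H)=\langle y^q\rangle$ can be nontrivial (e.g.\ $H=Z_3\rtimes Z_4$ acting through $Z_2$), and then $\mathrm{O}_{p',p}(H)=P\langle y^q\rangle\neq \mathrm{O}_p(H)$. The conclusion $q\in\pi(H/\mathrm{O}_{p',p}(H))$ is nevertheless correct: if $H/\mathrm{O}_{p',p}(H)$ were a $p$-group, the Sylow $q$-subgroup of $H$ would lie in the $p$-nilpotent normal subgroup $\mathrm{O}_{p',p}(H)$, hence be normal in $H$, forcing $H=P\times Q$ to be nilpotent. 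Since this inclusion is anyway the cited \cite[Proposition 2.4]{VM}, the slip does not affect the validity of your proof of the lemma, but the intermediate claims $\mathrm{O}_{p'}(H)=1$ and $\mathrm{O}_{p',p}(H)=\mathrm{O}_p(H)$ should be corrected.
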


\begin{proof}
  According to \cite[Proposition 2.4]{VM} $\Gamma_s(G)\subseteq\Gamma_{Nc}(G)\subseteq\Gamma_H(G)$ for any group $G$. Hence we need to prove only  that $\Gamma_s(G)=\Gamma_H(G)$ for a Sylow tower group $G$. Assume the contrary, let a Sylow tower group $G$ be a minimal order counterexample. Since $V(\Gamma_H(G))=V(\Gamma_s(G))=\pi(G)$, we see that there is $(p, q)\in E(\Gamma_H(G))\setminus E(\Gamma_s(G))$. Since $G$ has a Sylow tower, we see that $\mathrm{O}_{p',p}(G)$ contains all Sylow $p$-subgroups of $G$. Therefore $p\neq q$.

  Let $N$ be a minimal normal subgroup of $G$. Recall that the class of Sylow tower groups is closed under taking epimorphic images. Therefore $G/N$ is a Sylow tower group. From $|G/N|<|G|$ and our assumption it follows that $\Gamma_s(G/N)=\Gamma_H(G/N)$, in particular $(p, q)\not\in E(\Gamma_H(G))$.     If $G$ has two minimal normal subgroups $N_1$ and $N_2$, then $\Gamma_s(G)=\Gamma_s(G/N_1)\cup\Gamma_s(G/N_2)=\Gamma_H(G/N_1)\cup\Gamma_H(G/N_2)=\Gamma_H(G)$ by Lemma \ref{lem1}, a contradiction. Thus  $G$ has the unique minimal normal subgroup $N$. Since $G$ is soluble, $N$ is an $r$-group for some prime $r$. If $r\neq p$, then $\mathrm{O}_{p', p}(G/N)=\mathrm{O}_{p',p}(G)/N$. Hence $(p, q)\in E(\Gamma_H(G/N))$, a contradiction. Now $r=p$. Since $G$ is a Sylow tower group with the unique minimal   normal subgroup $N$, we see that a Sylow $p$-subgroup $P$ of $G$ is normal in $G$. Let $Q$ be a Sylow $q$-subgroup of   $G$. Then $Q\leq N_G(P)$. From $(p, q)\not\in E(\Gamma_s(G))$ it follows that $Q\leq C_G(P)$. Now $Q\leq C_G(H/K)$ where $H/K$ is a chief $p$-factor of $G$. Recall that $\mathrm{O}_{p', p}(G)$ is the intersection of centralizers of all chief $p$-factors of $G$. So $Q\leq \mathrm{O}_{p', p}(G)$. Thus $(p, q)\not\in E(\Gamma_H(G))$, the final contradiction.
\end{proof}

\begin{proof}[Proof of Theorem \ref{thm1}]
  From Lemma \ref{equal} it follows that $\Gamma_s(\mathfrak{N})=\Gamma_{Nc}(\mathfrak{N})=\Gamma_H(\mathfrak{N})$. Note that $V(\Gamma_H(\mathfrak{N}))=\mathbb{P}$ and $E(\Gamma_H(\mathfrak{N}))=\emptyset$.
  Let $\Gamma\in\{\Gamma_s, \Gamma_{Nc}, \Gamma_H\}$. Now from the proof of Theorem  \ref{thm} it follows that
 $$ \Gamma(G)= \Gamma(G)\cup\Gamma(\mathfrak{N})_{|G}
=\bigcup_{i=1}^n\Gamma(G_i) \cup\Gamma(\mathfrak{N})_{|G}=\bigcup_{i=1}^n\Gamma(G_i).$$
Theorem \ref{thm1} is proved.\end{proof}

\begin{proof}[Proof of Theorem \ref{thm2}]
  From Lemma \ref{equal} it follows that $\Gamma_s(\mathfrak{U})=\Gamma_{Nc}(\mathfrak{U})=\Gamma_H(\mathfrak{U})$. Note that $V(\Gamma_H(\mathfrak{U}))=\mathbb{P}$.  It is well known that a group $G$ is supersoluble iff $G/\mathrm{O}_{p',p}(G)$ is abelian of exponent dividing $p-1$. Hence $E(\Gamma_H(\mathfrak{U}))\subseteq\{(p, q)\mid q\in\pi(p-1)\}$. On the other hand if $q\in\pi(p-1)$, then a cyclic group $Z_p$ of order $p$ has a power automorphism of order $q$. Now $H=Z_p\rtimes Z_q$ is supersoluble and $q\in\pi(H/\mathrm{O}_{p',p}(H))$. Thus $E(\Gamma_H(\mathfrak{U}))=\{(p, q)\mid q\in\pi(p-1)\}$.   Now Theorem \ref{thm2} directly follows from Theorem \ref{thm}.\end{proof}

\section{The proof of Theorems \ref{mut} and \ref{thm4}}


 We need the following lemma in the proof of Theorem \ref{mut}.

\begin{lemma}\label{Centralizer}
  Let $P$ be a Sylow $p$-subgroup of a Schmidt $(p, q)$-subgroup   $S$ of a group $G$. If $A$ is a subgroup of $G$ with $P\leq A$ and $G=AC_G(P)$, then $A$ contains a Schmidt $(p, q)$-subgroup.   \end{lemma}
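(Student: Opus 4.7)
The plan is to transfer a Sylow $q$-generator of $S$ into $A$ modulo $C_G(P)$, and then extract a Schmidt $(p,q)$-subgroup inside $A$. I would start by choosing a generator $y$ of a Sylow $q$-subgroup of $S$; this is cyclic by the standard structure of Schmidt groups, so $S=P\langle y\rangle$, and the automorphism $\phi$ of $P$ induced by $y$ has order exactly $q$, since $y^q$ centralizes $P$ while $y$ does not. Using $G=AC_G(P)$, I would write $y=ac$ with $a\in A$ and $c\in C_G(P)$. Both $y$ and $c$ normalize $P$, hence so does $a=yc^{-1}$; and since $c$ centralizes $P$, a short computation gives $a^{-1}xa=y^{-1}xy$ for every $x\in P$, so $a$ induces on $P$ the same automorphism $\phi$ as $y$, of $q$-power order.

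Next I would pass from $a$ to its $q$-part. Write $a=a_qa_{q'}$ as the product of its commuting $q$- and $q'$-parts, both lying in $\langle a\rangle\leq A$. In $\mathrm{Aut}(P)$ we have $a|_P=a_q|_P\cdot a_{q'}|_P$ with commuting factors; since $a|_P$ has $q$-power order while $a_{q'}|_P$ has order coprime to $q$, this forces $a_{q'}|_P=1$. Hence $a_q$ induces $\phi$ on $P$ and in particular acts nontrivially.

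Now I would set $H:=P\langle a_q\rangle\leq A$. This is a non-nilpotent $\{p,q\}$-group in which $P$ is the unique, and therefore normal, Sylow $p$-subgroup. Since every non-nilpotent group contains a minimal non-nilpotent (that is, Schmidt) subgroup, $H$ has such a subgroup $T$, necessarily a Schmidt $\{p,q\}$-group. Because every $p$-subgroup of $H$ is contained in $P$, the Sylow $p$-subgroup of $T$ equals $T\cap P$ and is normal in $T$. Thus $T$ is a Schmidt $(p,q)$-subgroup contained in $A$.

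The only subtle step is the passage from $a$ to its $q$-part: one must verify that the $q'$-part of $a$ centralizes $P$, which follows from the fact that the automorphism of $P$ induced by $a$ has $q$-power order, inherited from $y$. The remaining ingredients are the classical structure of Schmidt groups and the elementary existence of minimal non-nilpotent subgroups in any non-nilpotent group.
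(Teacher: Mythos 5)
Your proof is correct and follows essentially the same route as the paper: factor the $q$-generator of $S$ through $G=AC_G(P)$, observe that the $A$-factor induces the same nontrivial $q$-power-order automorphism of $P$, reduce to its $q$-part, and extract a Schmidt $(p,q)$-subgroup from the resulting non-nilpotent $p$-closed $\{p,q\}$-group. The only difference is presentational: you argue directly that the $q$-part acts nontrivially, whereas the paper assumes no Schmidt $(p,q)$-subgroup exists in $A$ and derives a contradiction in $N_G(P)/C_G(P)$.
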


\begin{proof}
  Let $Q$ be a Sylow $q$-subgroup of $S$, then $Q=\langle x\rangle$ is cyclic. Since $G=AC_G(P)=C_G(P)A$, there exist $y\in A$ and $z\in C_G(P)$ with $x=zy$. Now $P=P^x=P^y$. It means that $P\trianglelefteq P\langle y\rangle\leq A$.    Assume that $A$ does not contain a Schmidt $(p, q)$-group. Now $P\mathrm{O}_q(\langle y\rangle)$ is a $p$-closed $\{p, q\}$-group without Schmidt $(p, q)$-subgroups. It means that $P\mathrm{O}_q(\langle y\rangle)$ is nilpotent. Hence $\langle y_1\rangle= \mathrm{O}_q(\langle y\rangle)\leq C_G(P)$. Let $y_2=\mathrm{O}_{q'}(\langle y\rangle)$. So $y=y_1y_2$   It is well known that $C_G(P)\trianglelefteq N_G(P)$. Note that $x, y\in N_G(P)$. Now $\langle x\rangle C_G(P)/C_G(P)$ is a non-trivial $q$-group. From the other hand  $\langle x\rangle C_G(P)/C_G(P)=\langle zy_1y_2\rangle C_G(P)/C_G(P)=\langle y_2\rangle C_G(P)/C_G(P)$ is a $q'$-group, a contradiction. \end{proof}

\begin{proof}[Proof of Theorem \ref{mut}]
\textbf{\emph{Let prove that }}$\Gamma_{Nc}(G)\subseteq \Gamma_{Nc}(A)\cup \Gamma_{Nc}(B)\cup\Gamma(A, B)$.
Note that  $\Gamma_{Nc}(A)\cup\Gamma_{Nc}(B)\subseteq \Gamma_{Nc}(G)$ by Lemma \ref{lem1}.

Assume that $\Gamma_{Nc}(G)\subseteq \Gamma_{Nc}(A)\cup \Gamma_{Nc}(B)\cup\Gamma(A, B)$ is false. Let chose a minimal order group $G$ such that $G$ is a mutually permutable product of subgroups $ A$ and $B$ and  $\Gamma_{Nc}(G)\not\subseteq \Gamma_{Nc}(A)\cup \Gamma_{Nc}(B)\cup\Gamma(A, B)$.
It means that there is $(p, q)\not\in E(\Gamma_{Nc}(A)\cup \Gamma_{Nc}(B)\cup\Gamma(A, B))$ such that $(p, q)\in E(\Gamma_{Nc}(G))$. Therefore $G$ has a Schmidt $(p, q)$-subgroup $S$.

Since $A_GB_G\neq 1$ by \cite[Theorem 4.3.11]{PFG}, WLOG we may assume that $A_G$ contains a minimal normal subgroup $N$  of $G$.

 Now $G/N=(A/N)(BN/N)$ is a mutually permutable product of subgroups $A/N$ and $ BN/N$ by \cite[Lemma 4.1.10]{PFG}. Hence      $\Gamma_{Nc}(G/N)\subseteq \Gamma_{Nc}(A/N)\cup \Gamma_{Nc}(BN/N)\cup\Gamma(A/N, BN/N)$. Note that $BN/N\simeq B/(B\cap N)$. It means that $\Gamma_{Nc}(A/N)\subseteq \Gamma_{Nc}(A)$ and $\Gamma_{Nc}(BN/N)=\Gamma_{Nc}(B/(B\cap N))\subseteq \Gamma_{Nc}(B)$ by Lemma \ref{lem1}. By the definition of $\Gamma(A,B)$ we see that $$\Gamma(A/N, BN/N)=\Gamma(A/N, B/(B\cap N))\subseteq\Gamma(A,B).$$ Now $\Gamma_{Nc}(G/N)\subseteq \Gamma_{Nc}(A)\cup \Gamma_{Nc}(B)\cup\Gamma(A, B)$. So $(p, q)\not\in E(\Gamma_{Nc}(G/N))$. Hence $SN/N$ is not a Schmidt group. Therefore $S\cap N$ contains a Sylow $p$-subgroup $P_0$ of $S$. Denote a Sylow $q$-subgroup of $S$ by $Q_0$.

Assume that $N\leq A\cap B$. There exist Sylow $q$-subgroups $Q, Q_1$ and $Q_2$ of $G, A$ and $B$ respectively such that $Q=Q_1Q_2$.  Note that there is $x\in G$ with $Q_0\leq Q^x$. Now $S\leq NQ^x=(NQ_1)^x(NQ_2)^x$. Let $T=NQ^x$,  $H=(NQ_1)^x$ and $K=(NQ_2)^x$.  From $\Gamma_{Nc}(H)=\Gamma_{Nc}(NQ_1)\subseteq\Gamma_{Nc}(A)$ and $\Gamma_{Nc}(K)=\Gamma_{Nc}(NQ_2)\subseteq\Gamma_{Nc}(B)$ it follows that $(p, q)\not\in E(\Gamma_{Nc}(H)\cup \Gamma_{Nc}(K))$.
Let $P$ be a Sylow $p$-subgroup of $N$ with $P_0\leq P$. By Frattini's Argument $H=NN_H(P)$. So there is a $q$-subgroup $Q_3$ of $ N_H(P)$ with $H=NQ_3$. Note that $PQ_3$ is a $p$-closed $\{p, q\}$-group without Schmidt $(p, q)$-subgroups. It means that $PQ_3$ is nilpotent. Hence $ Q_3\leq C_H(P)$. Therefore $H=NC_H(P)$. Similar arguments show that $K=NC_K(P)$. So $T=NC_T(P)$. Now $N$ contains a Schmidt $(p, q)$-group by Lemma \ref{Centralizer}, a contradiction.

Assume now that $N\not\leq A\cap B$. It means that $N\cap B=1$ by \cite[Lemma 4.3.3(4)]{PFG}.  Suppose that $B\leq C_G(N)$.
Now $A$ has a Schmidt  $(p, q)$-group by  Lemma \ref{Centralizer}, a contradiction.
Thus $B\not\leq C_G(N)$.
In this case $N$ is cyclic and $A\leq C_G(N)$ by \cite[Lemma 4.3.3(5)]{PFG}.
Since $G=AB=C_G(N)(NB)$, we see that $NB$ contains a Schmidt  $(p, q)$-group by  Lemma \ref{Centralizer}. Hence $q\in\pi(NB/C_{NB}(N))\subseteq\pi(B)$.
Since $N$ is a cyclic $p$-group, we see that $ NB/C_{NB}(N)\simeq G/C_G(N)$ is abelian of exponent dividing $p-1$. Therefore $(p, q)\in E(\Gamma(A, B))$, the final contradiction.

\textbf{\emph{Let prove that}} $\Gamma_{H}(G)\subseteq \Gamma_{H}(A)\cup \Gamma_{H}(B)\cup\Gamma(A, B)\cup\{(p, p)\mid p\in\pi(G)\}$.
Note that  $\Gamma_{H}(A)\cup \Gamma_{H}(B)\subseteq \Gamma_{H}(G)$ by Lemma \ref{lem1}.

Assume that $\Gamma_{H}(G)\subseteq \Gamma_{H}(A)\cup \Gamma_{H}(B)\cup\Gamma(A, B)\cup\{(p, p)\mid p\in\pi(G)\}$ is false. Let chose a minimal order group $G$ such that $G$ is a mutually permutable product of subgroups $ A$ and $B$ and  $\Gamma_{H}(G)\not\subseteq \Gamma_{Nc}(A)\cup \Gamma_{Nc}(B)\cup\Gamma(A, B)\cup\{(p, p)\mid p\in\pi(G)\}$.
It means that there is $(p, q)\not\in E(\Gamma_{H}(A)\cup \Gamma_{H}(B)\cup\Gamma(A, B)\cup\{(p, p)\mid p\in\pi(G)\})$ such that $(p, q)\in E(\Gamma_{H}(G))$. In particular, $p\neq q$.

WLOG we may assume that $A$ contains a minimal normal subgroup $N$  of $G$ by \cite[Theorem 4.3.11]{PFG}. Now $G/N=(A/N)(BN/N)$ is a mutually permutable product of subgroups $A/N$ and $ BN/N$ by \cite[Lemma 4.1.10]{PFG}. Hence      $\Gamma_{H}(G/N)\subseteq \Gamma_{H}(A/N)\cup \Gamma_{H}(BN/N)\cup\Gamma(A/N, BN/N)\cup\{(p, p)\mid p\in\pi(G/N)\}$. Note that  $\Gamma_{H}(A/N)\subseteq \Gamma_{H}(A)$,  $\Gamma_{H}(BN/N)\subseteq \Gamma_{H}(B)$ by Lemma \ref{lem1}, and $\Gamma(A/N, BN/N)\subseteq\Gamma(A,B)$. Now $\Gamma_H(G/N)\subseteq \Gamma_{H}(A)\cup \Gamma_{H}(B)\cup\Gamma(A, B)\cup\{(p, p)\mid p\in\pi(G)\}$. So $(p, q)\not\in E(\Gamma_{H}(G/N))$. From 3 of Lemma \ref{lem1} and our assumption it follows that $N$ must be the unique minimal normal subgroup of $G$.
If $\Phi(G)\not\simeq 1$, then similar arguments show that $(p, q)\not\in E(\Gamma_H(G/\Phi(G)))$. Note that $\Gamma_H(G/\Phi(G))=\Gamma_H(G)$ by \cite[Theorem 2.7]{VM}, a contradiction. So $\Phi(G)=1$. Thus $G$ is a primitive group with $C_G(N)\leq N$.

 If $N$ is a $p'$-group, then $\mathrm{O}_{p',p}(G/N)=\mathrm{O}_{p',p}(G)/N$ and $G/\mathrm{O}_{p',p}(G)\simeq (G/N)/\mathrm{O}_{p',p}(G/N)$. Hence $(p, q)\in E(\Gamma_{H}(G/N))$, a contradiction. Now $p\in\pi(N)$. Therefore $\mathrm{O}_{p'}(G)=1$.
 Assume that $N\leq A\cap B$. Hence $\mathrm{O}_{p'}(A)=\mathrm{O}_{p'}(B)=1$.  Now $(\pi(A)\setminus\{p\})\subseteq\pi(A/\mathrm{O}_{p',p}(A))$ and $(\pi(B)\setminus\{p\})\subseteq\pi(B/\mathrm{O}_{p',p}(B))$. It means that $(p, q)\in E(\Gamma_H(A)\cup\Gamma_H(B))$, a contradiction. Therefore $N\not\leq A\cap B$. It means that $N\cap B=1$ by \cite[Lemma 4.3.3(4)]{PFG}.

 Now either $A\leq C_G(N)$ or $B\leq C_G(N)$ by \cite[Lemma 4.3.3(5)]{PFG}. If $B\leq C_G(N)$, then from $C_G(N)\leq N\leq A$ it follows that $A=G$, and $(p, q)\in E(\Gamma_H(A))$, a contradiction.
Thus $B\not\leq C_G(N)$.
In this case $N$ is cyclic and $A\leq C_G(N)$ by \cite[Lemma 4.3.3(5)]{PFG}. Hence $N\leq A\leq C_G(N)\leq N$. Thus $N=C_G(N)=A$ is a cyclic group of order $p$.
In this case $G/N$ is an abelian group of exponent dividing $p-1$.
From $\mathrm{O}_{p'}(G)=1$   it follows that is a $\mathrm{O}_{p', p}(G)=N$. Therefore $\pi(G/\mathrm{O}_{p', p}(G))\subseteq\pi(p-1)$. Hence $q\in\pi(p-1)$.  Thus $(p, q)\in E(\Gamma(A, B))$, the final contradiction.
\end{proof}

\begin{proof}[Proof of Corollary \ref{Shemetkov}]
  Let $\mathfrak{F}$ be a hereditary formation with the Shemetkov property. Then $\mathfrak{F}=(G\mid\Gamma_{Nc}(G)\subseteq\Gamma_{Nc}(\mathfrak{F}))$ by \cite[Theorem 4.4]{VM}. 
  Assume that $\mathfrak{F}$ is closed under taking mutually permutable products. Let $G$ be a supersoluble Schmidt $\pi(\mathfrak{F})$-group. Then $G/\Phi(G)$ is a mutually permutable product of groups $Z_p$ and $Z_q$  of orders $p$ and $q$ for some $p, q\in\pi(\mathfrak{F})$ with $q\in\pi(p-1)$. Hence $G/\Phi(G)\in\mathfrak{F}$. Since the class of soluble $\mathfrak{F}$-groups is saturated \cite[Corollary 6.4.5]{BallesterBollinches2006}, $G\in\mathfrak{F}$. Thus $\mathfrak{F}$ contains every supersoluble Schmidt $\pi(\mathfrak{F})$-group.
  
  Assume now that $\mathfrak{F}$ contains every supersoluble Schmidt $\pi(\mathfrak{F})$-group. Hence $(p, q)\in E(\Gamma_{Nc}(\mathfrak{F}))$ for every $p, q\in\pi(\mathfrak{F})$ with $q\in\pi(p-1)$. Now if $G=AB$ is a mutually permutable product of $\mathfrak{F}$-subgroups $A$ and $B$, then $\Gamma_{Nc}(G)\subseteq\Gamma_{Nc}(\mathfrak{F})$ by Theorem \ref{mut}. Hence $G\in\mathfrak{F}$.         Thus $G$ is closed under taking mutually permutable products. 
\end{proof}

\begin{proof}[Proof of Corollary \ref{Beidleman}]
  Let $\pi$ be a $p$-special set of primes and $\mathfrak{F}$ be the class of normal extensions of $p$-groups by $\pi$-groups. Then $p\not\in\pi$. Hence $\mathfrak{F}$ is the formation of all $p$-closed $\pi(\mathfrak{F})$-groups. Recall that an $s$-critical group for the class of all $p$-closed groups     is a Schmidt $(q, p)$-group for some prime $q$. Hence $\mathfrak{F}$ is the formation with the Shemetkov property. Note that $\mathfrak{F}$ contains every supersoluble $\pi(\mathfrak{F})$-group. Thus $\mathfrak{F}$ is closed under taking mutually permutable products by Corollary \ref{Shemetkov}.
\end{proof}

We need the following Lemma in the prove of Theorem \ref{thm4}.

\begin{lemma}\label{ro2}
  Let $G$ be a soluble group and $n\geq 3$. If $G=G_1\dots G_n$ is a product of pairwise permutable subgroups $G_1,\dots, G_n$, then $$\Gamma_{Nc}(G)=\bigcup_{1\leq i<j\leq n} \Gamma_{Nc}(G_iG_j).$$
\end{lemma}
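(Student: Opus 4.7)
The inclusion $\bigcup_{1\leq i<j\leq n}\Gamma_{Nc}(G_iG_j)\subseteq\Gamma_{Nc}(G)$ follows at once from Lemma~\ref{lem1}(1), since each $G_iG_j$ is a subgroup of $G$. For the reverse inclusion I argue by contrapositive: assume no $G_iG_j$ contains a Schmidt $(p,q)$-subgroup, and aim to show that $G$ does not either.

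The plan is to reduce the problem to a single Hall $\{p,q\}$-subgroup of $G$. Since $G$ is soluble and the $G_i$ pairwise permute, a standard Hall-system argument for soluble pairwise permutable products (as in \cite[Chapter~1]{PFG}) yields pairwise permutable Hall $\{p,q\}$-subgroups $H_i$ of $G_i$ such that $H:=H_1\cdots H_n$ is a Hall $\{p,q\}$-subgroup of $G$ and each $H_iH_j$ is a Hall $\{p,q\}$-subgroup of $G_iG_j$. Because every Schmidt $(p,q)$-subgroup is a $\{p,q\}$-group and all Hall $\{p,q\}$-subgroups in a soluble group are conjugate, a group contains a Schmidt $(p,q)$-subgroup if and only if its Hall $\{p,q\}$-subgroup does. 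Moreover, by It\^o's theorem on $p$-nilpotency, a soluble $\{p,q\}$-group contains a Schmidt $(p,q)$-subgroup if and only if it is not $q$-closed (equivalently, not $p$-nilpotent). So the problem reduces to proving: if every $H_iH_j$ is $q$-closed, then $H$ is $q$-closed.

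I prove this by induction on $n$, the case $n=2$ being the hypothesis. For $n\geq 3$, let $Q_i$ denote the Sylow $q$-subgroup of $H_i$; it is normal in $H_i$ since $H_i\leq H_iH_j$ inherits $q$-closedness. A short order count using the uniqueness of Sylow $q$-subgroups in $q$-closed groups gives that the $q$-part of $|H_i\cap H_j|$ equals $|Q_i\cap Q_j|$, whence $Q_iQ_j$ is the normal Sylow $q$-subgroup of $H_iH_j$; in particular the $Q_i$ pairwise permute. Applying the inductive hypothesis to $H^\ast:=H_1\cdots H_{n-1}$ yields that $H^\ast$ is $q$-closed with normal Sylow $Q^\ast:=Q_1\cdots Q_{n-1}$. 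Setting $Q:=Q^\ast Q_n$, an analogous count shows $Q$ is a Sylow $q$-subgroup of $H$. For its normality, the key inclusion is $Q_i^{g}\leq Q_iQ_k$ for every $g\in H_k$, which holds because $H_iH_k$ is $q$-closed with unique Sylow $q$-subgroup $Q_iQ_k$. Iterating this along the factors of an element $g=g_1g_2\cdots g_{n-1}\in H^\ast$, and combining with the identity $(Q_1Q_k)(Q_2Q_k)\cdots(Q_jQ_k)=Q_1\cdots Q_jQ_k$ that comes from pairwise permutability of the $Q_i$, gives $Q^g\leq Q$ and hence $Q^g=Q$ by order; the case $g\in H_n$ is symmetric, and since $H=H^\ast H_n$ this forces $Q\trianglelefteq H$. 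The main obstacle is precisely this normality step: one has to compose pairwise conjugation bounds along a factorisation of a generic $g\in H$, and that is where both the pairwise permutability and pairwise $q$-closedness are genuinely used; the preliminary Hall reduction and the Schmidt/$q$-closedness dichotomy are essentially off-the-shelf tools.
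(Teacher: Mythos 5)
Your proof is correct, but it follows a genuinely different route from the paper's. The paper disposes of the lemma in a few lines: writing $H_l=\prod_{j\neq l}G_j$, it observes that $G=H_1H_2=H_1H_3=H_2H_3$ is a triply factorized soluble group and quotes \cite[Theorem 7.1(1)]{VM} to get $\Gamma_{Nc}(G)=\Gamma_{Nc}(H_1)\cup\Gamma_{Nc}(H_2)\cup\Gamma_{Nc}(H_3)$, then finishes by induction on $n$; all the group-theoretic content is outsourced to that cited result. You instead argue from first principles: pass to a factorized Hall $\{p,q\}$-subgroup $H=H_1\cdots H_n$ with each $H_iH_j$ a Hall $\{p,q\}$-subgroup of $G_iG_j$, translate ``no Schmidt $(p,q)$-subgroup'' into ``$q$-closed'' via the It\^o/Frobenius characterization of minimal non-$p$-nilpotent groups, and then show directly that a pairwise permutable product all of whose two-fold partial products are $q$-closed is itself $q$-closed, using uniqueness of Sylow $q$-subgroups in $q$-closed groups to force $Q_i^{g}\leq Q_iQ_k$ for $g\in H_k$ and hence $Q^g=Q$ on generators. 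I checked the delicate points and they go through: the order counts $|Q_i\cap Q_j|=|H_i\cap H_j|_q$ and $|Q^\ast\cap Q_n|=|H^\ast\cap H_n|_q$ are valid precisely because the relevant Sylow $q$-subgroups are unique, and the rearrangement $(Q_1Q_k)\cdots(Q_nQ_k)=Q_1\cdots Q_nQ_k=Q$ uses only pairwise permutability of the $Q_i$. Two presentational requests: carry the identity $Q^\ast=Q_1\cdots Q_{n-1}$ explicitly as part of the inductive statement (your order count at stage $n$ genuinely needs it, and it is what makes $Q^\ast Q_n$ a subgroup), and pin down the reference for the factorized Hall $\{p,q\}$-subgroup of a soluble pairwise permutable product --- the statement you want is the Hall-system result for such products (Amberg--Franciosi--de Giovanni, \emph{Products of Groups}, Lemma 1.3.2, or its analogue in Chapter 1 of \cite{PFG}), not just the two-factor Wielandt--Kegel version. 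What your approach buys is independence from \cite[Theorem 7.1(1)]{VM} and a transparent view of where solubility enters (existence, conjugacy and factorization of Hall $\{p,q\}$-subgroups); what the paper's approach buys is brevity.
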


\begin{proof}
  Assume that $n=3$, then $G=(G_1G_2)(G_1G_3)=(G_1G_2)(G_2G_3)=(G_1G_3)(G_2G_3)$. Now $\Gamma_{Nc}(G)=\Gamma_{Nc}(G_1G_2)\cup\Gamma_{Nc}(G_1G_3)\cup\Gamma_{Nc}(G_2G_3)$ by \cite[Theorem 7.1(1)]{VM}. Assume that we prove Lemma \ref{ro2} for all $n$ with $3\leq n\leq k$, let prove it for $n=k+1$. Let $H_l=\prod_{j=1, j\neq l}^n$. Then by our assumption  $\Gamma_{Nc}(H_l)=\bigcup_{1\leq i<j\leq n, i,j\neq l} \Gamma_{Nc}(G_iG_j)$. From $G=H_1H_2=H_1H_3=H_2H_3$ and  \cite[Theorem 7.1(1)]{VM} it follows that  $$\Gamma_{Nc}(G)=\Gamma_{Nc}(H_1)\cup\Gamma_{Nc}(H_2)\cup\Gamma_{Nc}(H_3)=\bigcup_{1\leq i<j\leq k+1} \Gamma_{Nc}(G_iG_j).$$
  Now Lemma \ref{ro2} follows from the mathematical induction principle.
\end{proof}

\begin{proof}[Proof of Theorem \ref{thm4}]
Let a group $G$ be a product of pairwise mutually permutable soluble subgroups $G_1,\dots, G_n$.
From \cite[Theorem 4.1.14]{PFG} it follows that $G$ is soluble. Now  $$\Gamma_{Nc}(G)=\bigcup_{1\leq i<j\leq n} \Gamma_{Nc}(G_iG_j).$$
According to Theorem \ref{mut}  $\Gamma_{Nc}(G_iG_j)\subseteq \Gamma_{Nc}(G_i)\cup\Gamma_{Nc}(G_j)\cup\Gamma(G_i, G_j)$. Thus
 $$\Gamma_{Nc}(G)\subseteq\bigcup_{1\leq i\leq n}\Gamma_{Nc}(G_i)\cup\bigcup_{1\leq i, j\leq n, i\neq j} \Gamma(G_i, G_j).$$

 From $\Gamma_{Nc}(G)\subseteq \Gamma_H(G)$ for every group $G$ and  \cite[Lemma 3]{Murashka2019} it follows that if $(p, q)\in E(\Gamma_H(G))\setminus E(\Gamma_{Nc}(G))$ for a soluble group $G$, then $p=q$. Thus
 $$\Gamma_{H}(G)\subseteq\bigcup_{1\leq i\leq n}\Gamma_{H}(G_i)\cup\bigcup_{1\leq i, j\leq n, i\neq j} \Gamma(G_i, G_j)\cup\{(p, p)\mid p\in\pi(G)\}.$$
 Theorem \ref{thm4} is proved.
\end{proof}

\begin{proof}[Proof of Corollary \ref{cor41}]
It is clear that the class $\mathfrak{F}$ of all groups whose Schmidt subgroups are supersoluble is a hereditary formation with the Shemetkov property and $\Gamma_{Nc}(\mathfrak{F})=\Gamma_{Nc}(\mathfrak{U})=\{(p, q)\mid q\in\pi(p-1)\}$. Note that $\Gamma_{Nc}(\mathfrak{F})$ does not have cycles. Hence every $\mathfrak{F}$-group has a Sylow tower by \cite[Theorem 6.2(b)]{VM}. In particular, $G_1,\dots, G_n$ are soluble.   Now  $G=G_1\dots G_n$ is a product of mutually permutable soluble $\mathfrak{F}$-subgroups $G_1,\dots, G_n$. Therefore $\Gamma_{Nc}(G)\subseteq\Gamma_{Nc}(\mathfrak{F})$ by Theorem \ref{thm4}. Thus $G\in\mathfrak{F}$ by \cite[Theorem 4.4]{VM}.
\end{proof}

\section{Final remarks and open questions}

From 4 of Lemma \ref{lem1} it follows that if a group $G=AB$ is the product of its normal subgroups $A$ and $B$,   then $\Gamma(G)=\Gamma(A)\cup\Gamma(B)$ where $\Gamma\in\{\Gamma_H, \Gamma_{Nc}\}$. Note that $S_4$ is the product of its normal subgroups $S_4$ and $A_4$ and $\Gamma_s(S_4)\cup\Gamma_s(A_4)\neq\Gamma_s(S_4)$. Nevertheless the following question seems interesting.

\begin{pr}
  If a group $G=AB$ is the product of its normal subgroups $A$ and $B$.   Is $\Gamma_s(G)\subseteq \Gamma_s(A)\cup\Gamma_s(B)$?
\end{pr}

Through $ \overline{\Gamma}$ here we denote an undirected graph on the same vertex set as $ \Gamma$ in which two vertices are connected by the edge if they are connected in $\Gamma$.
In the proves of \cite{DAniello2007, Kazarin2011} the Sylow graph was considered as a directed one but in \cite{Kazarin2011} it was defined  as an undirected one. Therefore the graph $\overline{\Gamma}_s$ seems interesting. Moreover

\begin{proposition}
  If a soluble group $G=AB$ is the product of its normal subgroups $A$ and $B$, then $\overline{\Gamma}_s(G)=\overline{\Gamma}_s(A)\cup\overline{\Gamma}_s(B)$.
\end{proposition}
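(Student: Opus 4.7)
The plan is to pass through the Hawkes graph, which behaves well under products of normal subgroups: by Lemma~\ref{lem1}(4), $\Gamma_H(G)=\Gamma_H(A)\cup\Gamma_H(B)$ whenever $G=AB$ with $A,B\trianglelefteq G$. The bridge between $\Gamma_s$ and $\Gamma_H$ that I need is the following claim: \emph{for a soluble group $H$ and primes $p\neq q$, $\{p,q\}$ is an edge of $\overline{\Gamma}_s(H)$ if and only if it is an edge of $\overline{\Gamma}_H(H)$.} The easy direction uses $\Gamma_s\subseteq\Gamma_H$. I also note that $\Gamma_s$ has no loops, since for a Sylow $p$-subgroup $P$ the factor $PC_H(P)/C_H(P)\simeq P/\mathrm{Z}(P)$ already contains the whole $p$-part of $N_H(P)/C_H(P)$; so $\overline{\Gamma}_s$ carries no loops and the proposition concerns only non-loop edges.

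The hard direction of the claim I prove by contrapositive: if $(p,q),(q,p)\notin \Gamma_s(H)$, I pick Sylow subgroups $P$ and $Q$ of $H$ for which $PQ$ is a Hall $\{p,q\}$-subgroup (Hall's theorem on soluble groups). The hypothesis $(p,q)\notin\Gamma_s(H)$ forces every $q$-element of $N_H(P)$ into $PC_H(P)$, so $N_Q(P)=C_Q(P)$ and $N_{PQ}(P)/C_{PQ}(P)\simeq P/\mathrm{Z}(P)$ is a $p$-group. Burnside's normal $p$-complement theorem applied inside $PQ$ then makes $PQ$ $p$-nilpotent, so $Q\trianglelefteq PQ$; symmetrically $P\trianglelefteq PQ$, whence $PQ=P\times Q$ and $[P,Q]=1$.

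Now set $R=\mathrm{O}_{p'}(H)$ and $K=\mathrm{O}_{p',p}(H)$. Since $PR/R$ is a Sylow $p$-subgroup of $H/R$ containing $K/R=\mathrm{O}_p(H/R)$, we have $K\leq PR=R\rtimes P$. Writing an element of $K$ as $rp$ with $r\in R$, $p\in P$, and using $[Q,P]=1$ together with the $Q$-invariance of $R$, a direct computation shows $[Q,K]\leq R$. Thus $QR/R$ centralizes $\mathrm{O}_p(H/R)$ in the soluble group $H/R$, whose $p'$-core is trivial. Hall--Higman's lemma gives $C_{H/R}(\mathrm{O}_p(H/R))\leq \mathrm{O}_p(H/R)$, so $QR/R\leq \mathrm{O}_p(H/R)$, which forces $QR/R=1$ by coprimality of $p$ and $q$. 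Hence $Q\leq R\leq K$, so $q\notin\pi(H/K)$ and $(p,q)\notin\Gamma_H(H)$, proving the claim.

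To finish the proposition, I apply the claim to $G$, $A$ and $B$ (all soluble), and combine with $\overline{\Gamma}_H(G)=\overline{\Gamma}_H(A)\cup\overline{\Gamma}_H(B)$ coming from Lemma~\ref{lem1}(4). Restricting to non-loop edges (which carry all the information for $\overline{\Gamma}_s$) yields the desired equality $\overline{\Gamma}_s(G)=\overline{\Gamma}_s(A)\cup\overline{\Gamma}_s(B)$. The main obstacle is the hard direction of the claim: pushing the purely local conditions $(p,q),(q,p)\notin\Gamma_s(H)$ up to the global conclusion $Q\leq\mathrm{O}_{p'}(H)$. The decisive inputs are Burnside's transfer theorem inside the Hall $\{p,q\}$-subgroup, together with the Fitting/Hall--Higman control of $C_{\overline H}(\mathrm{O}_p(\overline H))$ in the quotient $\overline H=H/\mathrm{O}_{p'}(H)$.
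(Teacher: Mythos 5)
Your route is genuinely different from the paper's (which passes through $\overline{\Gamma}_{Nc}$ via $\overline{\Gamma}_s(H)=\overline{\Gamma}_{Nc}(H)$ for soluble $H$ and then applies Lemma~\ref{lem1}(4)), and your bridging claim through $\overline{\Gamma}_H$ is in fact true; but your proof of its hard direction has a genuine gap at the normal-complement step. From $(p,q)\notin\Gamma_s(H)$ you correctly get $N_{PQ}(P)=PC_{PQ}(P)$, so that $N_{PQ}(P)/C_{PQ}(P)\simeq P/\mathrm{Z}(P)$ is a $p$-group. But Burnside's normal $p$-complement theorem needs $N(P)=C(P)$, i.e. $P\leq \mathrm{Z}(N(P))$ (which forces $P$ abelian); the weaker condition $N(P)=PC(P)$ does not imply $p$-nilpotence. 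Concretely, in $S_4$ with $P$ a Sylow $2$-subgroup and $Q$ a Sylow $3$-subgroup one has $N_{S_4}(P)=P=PC_{S_4}(P)$, hence $(2,3)\notin\Gamma_s(S_4)$, yet $Q$ is not normal in $PQ=S_4$. So the inference ``$(p,q)\notin\Gamma_s(H)$ implies $Q\trianglelefteq PQ$'' is false as stated, and the symmetric step fails for the same reason; you cannot obtain $[P,Q]=1$ from the two hypotheses one at a time.

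The step is repairable, but only by using $(p,q)\notin\Gamma_s(H)$ and $(q,p)\notin\Gamma_s(H)$ simultaneously. For instance, induct on $|PQ|$: the two conditions restrict from $H$ to the Hall subgroup $PQ$ by the computation you already carried out, and pass to quotients of $PQ$ by Lemma~\ref{lem1}(2); if $N$ is a minimal normal (say $p$-) subgroup of $PQ$, then $PQ/N$ is nilpotent by induction, so $P\trianglelefteq PQ$, whence $PQ=N_{PQ}(P)=PC_{PQ}(P)$ and $PQ/C_{PQ}(P)\simeq P/\mathrm{Z}(P)$ is a $p$-group, forcing $Q\leq C_{PQ}(P)$ and $[P,Q]=1$. (Note that in the $S_4$ example this induction breaks exactly because $(3,2)\in\Gamma_s(S_4)$.) Alternatively, the whole claim is already a consequence of results the paper uses elsewhere: $\overline{\Gamma}_s=\overline{\Gamma}_{Nc}$ for soluble groups \cite{Murashka2021} combined with the fact that $\Gamma_{Nc}$ and $\Gamma_H$ of a soluble group have the same non-loop edges \cite{Murashka2019}. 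Your remaining steps --- the absence of loops in $\Gamma_s$, the Hall--Higman argument giving $Q\leq \mathrm{O}_{p'}(H)$ once $[P,Q]=1$ is secured, and the final assembly via Lemma~\ref{lem1}(4) --- are correct.
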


\begin{proof}
  From \cite[Theorem 4.2(2)]{Murashka2021} it follows that $\overline{\Gamma}_s(H)=\overline{\Gamma}_{Nc}(H)$ for any soluble group $H$. Now   $\overline{\Gamma}_s(G)=\overline{\Gamma}_s(A)\cup\overline{\Gamma}_s(B)$ follows from 4 of Lemma \ref{lem1}.
\end{proof}

Note \cite[Proof of Theorem 4.2]{Murashka2021} that there are groups $H$ with $\overline{\Gamma}_s(H)\neq\overline{\Gamma}_{Nc}(H)$. That is why we ask

\begin{pr}
  If a group $G=AB$ is the product of its normal subgroups $A$ and $B$.   Is $\overline{\Gamma}_s(G)=\overline{\Gamma}_s(A)\cup\overline{\Gamma}_s(B)$?
\end{pr}

In Theorem \ref{mut} only $N$-critical and Hawkes graph of mutually permutable product was described. What can be said about the Sylow graph of mutually permutable\,product?\,For\,example

\begin{pr}
  If a group $G=AB$ is the product of mutually permutable subgroups $A$ and $B$.   Is $\Gamma_s(G)\subseteq \Gamma_s(A)\cup\Gamma_s(B)\cup\Gamma(A, B)$?
\end{pr}

The proof of Theorem \ref{mut} is based on the properties of mutually permutable products of 2 subgroups. The analogues of these properties for products of more than 2 subgroups are not known now. That is why we use some properties of $N$-critical graph of a soluble group (see Lemma \ref{ro2}) to prove Theorem   \ref{thm4}. Hence we have the following 2 questions.

\begin{pr}
  Does the conclusion of Theorem \ref{thm4} hold for the product of pairwise mutually permutable subgroups $G_1,\dots, G_n$?
\end{pr}

\begin{pr}
  Let $G$ be a  group and $n\geq 3$. If $G=G_1\dots G_n$ is the product of pairwise permutable subgroups $G_1,\dots, G_n$, then is $$\Gamma_{Nc}(G)=\bigcup_{1\leq i<j\leq n} \Gamma_{Nc}(G_iG_j)?$$
\end{pr}


{\small\bibliographystyle{plain}
\bibliography{forest}}

\end{document}